\newtheorem{theorem}{Theorem}[section]
\newtheorem{lemma}[theorem]{Lemma}
\newtheorem{proof of lemma}[theorem]{Proof of Lemma}
\newtheorem{corollary}[theorem]{Corollary}
\theoremstyle{definition}
\newtheorem{definition}[theorem]{Definition}
\newtheorem{remark}[theorem]{Remark}
\numberwithin{equation}{section}
\begin{document}

\title[Twisted spherical means]
{Spherical means on M\'{e}tivier groups and support theorem}

\author{Rupak Kumar Dalai, Somnath Ghosh and R.K. Srivastava}

\address{Department of Mathematics, Indian Institute of Technology, Guwahati, India 781039.}
\email{rupak.dalai@iitg.ac.in, gsomnath@iitg.ac.in, rksri@iitg.ac.in}

\subjclass[2000]{Primary 42A38; Secondary 44A35}

\date{\today}

\keywords{Convolution, Spherical harmonic, Heisenberg type group.}

\begin{abstract}
Let $Z_{r, R}$ be the space of continuous functions on the annulus $B_{r, R}$
in $\mathbb C^n$ whose $\lambda$-twisted spherical mean, in the set up of the
M\'{e}tivier group, vanishes over the spheres $S_s(z)\subset B_{r, R} $ with
ball $B_r(0)\subseteq B_s(z).$ We characterize the spherical harmonic coefficients
of functions in $Z_{r, R},$ eventually, in terms of polynomial growth, by which
we infer support theorem. Further, we prove that non-harmonic complex cone and
the boundary of a bounded domain are sets of injectivity for the $\lambda$-twisted
spherical means.
\end{abstract}

\maketitle

\section{Introduction}\label{section1}
In a remarkable result, Helgason proved a support theorem for continuous
functions having polynomial growth whose spherical mean vanishes over the spheres
surrounding a ball. In other words, let $\mu_s$ be the normalized surface measure
on the sphere $S_s^{n-1}.$ If $f$ is a continuous function on $\mathbb R^n,(n\geq2)$
such that $|x|^kf(x)$ is bounded for each non-negative integer $k,$ then $f$ is
supported in the ball $B_r(0)$ if and only if $f\ast\mu_s(x)=0, \forall~x\in\mathbb R^n$
and $\forall s>|x|+r,$ (see \cite{H}).

\smallskip

Later in \cite{EK}, Epstein and Kleiner generalized the Helgason's support
theorem significantly, by characterizing the space of all continuous functions on $\mathbb R^n,$
whose spherical mean vanishes over all spheres surrounding a ball, in terms of spherical harmonic
coefficients having polynomial growth. This result was first proved by Globevnik \cite{G}
in the plane.

\smallskip

Let $H_k$ be the restriction of the space of homogeneous harmonic polynomials
of degree $k$ to the unit sphere $S^{n-1},$ and $\{Y_k^l:l=1,\ldots,d_k\}$
is an orthonormal basis for $H_k.$ Then $f\in C(\mathbb R^n)$
can be expressed as
\[f(x)=\sum_{k=0}^{\infty}\sum_{l=1}^{d_k}~a_{kl}(\rho)~Y_k^l(\omega),\]
where $x=\rho\omega$ and $\rho =|x|.$ In \cite{EK}, authors had shown that
 $f\ast\mu_s(x)=0$ for all $x\in\mathbb R^n$ and $s>|x|+B$
as long as $a_{kl}\in\text{span}\{\rho^{k-n-2i}:i=0,1,\ldots,k-1\},$
whenever $\rho>B.$

\smallskip

Consequently, the support theorem is an immediate corollary of the above
result \cite{EK}. For other related work, we refer to \cite{BQ,CQ,Q1,V1,V2}.

\smallskip

Further, in the article \cite{NT2}, Thangavelu and Narayanan proved an analogue of the Helgason's
support theorem for the twisted spherical mean (TSM) for certain Schwartz class functions
on $\mathbb C^n.$ In \cite{RS}, authors characterized the space of all continuous functions
on $\mathbb C^n$ having TSM mean vanishes over the spheres surrounding a ball,
and proved an exact analogue of the Helgason's support  theorem for the twisted
spherical mean on $\mathbb C^n ~(n\geq2).$ For $n=1,$ authors have proved a
stronger result relaxing decay condition.

\smallskip
In Section \ref{section3}, we consider M\'{e}tivier group for
proving necessary conditions for a function to be in $Z^*_{r, R},$ the subspace
of ceratin smooth functions on $B_{r, R}.$ This result imitate a support theorem
for type functions. Further, we derive that the non-harmonic complex
cones in $\mathbb{C}^n$ are sets of injectivity for the $\lambda$-twisted spherical mean
for the class of continuous functions on M\'{e}tivier group. For a brief history of work
related to sets of injectivity for TSM on the Heisenberg group, we refer
to\cite{AR,NT1,Sri1,Sri2,Sri3,Sri4}.

\smallskip

Finally, in Section \ref{section4}, we prove analogous results on $H$-type group,
which is a special case of M\'{e}tivier group. We prove sufficient
condition for a function to be in $Z_{r, R},$ and derive the support theorem
and Heche-Bochner identity for the $\lambda$-twisted spherical mean. Then we prove
that the boundary of a bounded domain is a set of injectivity for $\lambda$-twisted
spherical mean.

\smallskip

We would like to mention that the results in the case of M\'{e}tivier group
is of restrictive nature with those in Heisenberg group due the fact that
the symplectic bilinear form appears in the group action of the M\'{e}tivier
group cannot be made $U(n)$-invariant, in general, due to higher dimensional
center of the M\'{e}tivier groups. This fact will be reflected as the distinct
eigenvalues of the corresponding symplectic matrix $U_\lambda$ that arises in Section
\ref{section3}. However, we prove that the symplectic bilinear form for $H$-type
group is similar to that of the Heisenberg group upto an orthogonal transformation.

\section{Preliminaries}\label{section2}
Let $G$ be connected, simply connected Lie group with real step two nilpotent
Lie algebra $\mathfrak g.$ Then $\mathfrak g$ has the orthogonal decomposition
$\mathfrak g=\mathfrak b\oplus\mathfrak z,$ where $\mathfrak z$ is the center
of $\mathfrak g.$ Since $\mathfrak g$ is a nilpotent, the exponential map
$\exp:\mathfrak g\rightarrow G$ is surjective, and hence $G$ can be parameterized
by $\mathfrak g,$ endowed with the exponential coordinates. Now, we can identify
$X+T\in \mathfrak{b}\oplus \mathfrak{z}$ with $\exp(X+T)$ and denote it by
$(X,T)\in \mathbb{R}^d \times \mathbb{R}^m$. Since $[\mathfrak{b},\mathfrak{b}]\subseteq\mathfrak{z}$
and $[\mathfrak{b},[\mathfrak{b},\mathfrak{b}]]=0$, by the Baker-Campbell-Hausdorff
formula, the group law on $G$ can be expressed as \[(X,T)(Y,S)=(X+Y,T+S+\frac{1}{2}[X,Y]),\]
where $X,Y\in \mathfrak{b}$ and $T,S\in\mathfrak{z}$.
Now, for $\omega\in\mathfrak z^\ast,$ consider the skew-symmetric bilinear form
$B_\omega$ on $\mathfrak b$ by $B_\omega(X,Y)=\omega\left([X,Y]\right).$
Let $m_\omega$ be the orthogonal complement of
$r_\omega=\left\{X\in\mathfrak b:B_\omega(X,Y)=0,~\forall~ Y\in\mathfrak b\right\}$ in
$\mathfrak b.$ Then $B_\omega$ is called a non-degenerate bilinear form when
$r_\omega$ is trivial.

\smallskip

In this article, we discuss some special type of step two nilpotent Lie groups.

\smallskip

\textbf{M\'{e}tivier groups:} We say the group $G$ is M\'{e}tivier group if $B_\omega$ is
non-degenerate for all non-zero $\omega\in\mathfrak z^\ast.$ In this case, $d=2n,$ even.
Let $B_1,\ldots, B_{2n}$ and $Z_1,\ldots, Z_m$ be orthonormal bases for $\mathfrak{b}$
and $\mathfrak{z}$ respectively. Since $[\mathfrak{b},\mathfrak{b}]\subseteq \mathfrak{z},$
there exist scalars $U_{j,l}^{(k)}$ such that
\[[B_j,B_l]=\sum_{k=1}^m U_{j,l}^{(k)}Z_k,\quad 1\leq j,l\leq 2n.\]
For $1\leq k\leq m,$ define ${2n\times 2n}$ skew-symmetric matrices by
$U^{(k)}=(U_{j,l}^{(k)}).$ Then the group law  for the M\'{e}tivier group can be expressed as
\begin{align}\label{exp4}
(x,t).(\xi,\tau)=\binom{x_i+\xi_i,~i=1,\ldots,2n}{~t_j+\tau_j+\frac{1}{2}
\langle x, U^{(j)}\xi\rangle,~j=1,\ldots,m },
\end{align}
where $x,\xi\in\mathbb R^{2n}$ and $t,\tau \in \mathbb R^m.$ For
$x=(x_1,\ldots,x_n,y_1,\ldots,y_n)\in \mathbb R^{2n},$ write
$z=(x_1+iy_1,\ldots,x_n+iy_n)=(z_1,\ldots,z_n)$ and say, $z$ be the complexification of $x.$
Let $z,w\in \mathbb C^n$ be the complexification of $x,\xi\in \mathbb R^{2n}.$ If we fix the
notation $U^{(j)}w$ for the complexification of $U^{(j)}\xi,$ then (\ref{exp4}) can be
simplified to
\begin{align}\label{exp8}
(z,t).(w,\tau)=\binom{z+w}{~t_j+\tau_j+\frac{1}{2}\text{ Re}\,(z\cdot\overline{U^{(j)}w}),~j=1,\ldots,m }.
\end{align}

\smallskip

\textbf{$H$-type groups:}
Suppose $\mathfrak{g}$ is endowed with an inner product $\langle\cdot,\cdot\rangle$ such that
for each $Z\in\mathfrak{z},$ the map $J_Z: \mathfrak{b}\rightarrow\mathfrak{b}$ defined by
$\langle J_Z(X),Y \rangle=\langle Z,[X,Y]\rangle$ for $X,Y\in\mathfrak{b},$ satisfies
$J^T_Z=-J_Z.$ We say that $\mathfrak{g}$ is $H$-type if $ J^2_Z=-|Z|^2I,$
whenever $Z\in\mathfrak{z}.$
Hence it follows that $J_ZJ_{Z'}+J_{Z'}J_Z=-2\langle Z,Z' \rangle I ~\mbox{for~all}~Z, Z'\in\mathfrak{z},$
where $I$ denotes the identity mapping. A connected, simply connected Lie group
$G$ with $H$-type Lie algebra is called Heisenberg type (or $H$-type) group. The
$H$-type groups, introduced by A. Kaplan \cite{K}, are examples of M\'{e}tivier group.
However, there are M\'{e}tivier groups that differ from the $H$-type groups.
For more details, see \cite{M,MS}.

\begin{theorem}\emph{\cite{BU}}\label{th1}
Let $G$ be connected, simply connected Lie group with real step two nilpotent Lie algebra
$\mathfrak g.$ Then $G$ is a $H$-type group if and only if $G$ is isomorphic to $\mathbb R^{2n+m}$ with
the group law (\ref{exp4}) and the matrices $U^{(1)},\ldots,U^{(m)}$ satisfies the following conditions:\\
$(a)$ $U^{(j)}$ is skew-symmetric $2n\times 2n$ orthogonal matrix, for $~j=1,\ldots,m.$ \\
$(b)$ $U^{(j)}U^{(l)}+U^{(l)}U^{(j)}=0$ ~for all ~$j,l=1,\ldots, m$ with $j\neq l$.
\end{theorem}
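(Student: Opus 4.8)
The plan is to prove the two directions of the biconditional characterization of $H$-type groups in terms of the matrices $U^{(1)},\ldots,U^{(m)}$. The structure of the argument rests on translating the abstract conditions $J_Z^T=-J_Z$ and $J_Z^2=-|Z|^2 I$ into concrete matrix identities via the defining relation $\langle J_Z(X),Y\rangle=\langle Z,[X,Y]\rangle$.

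\textbf{Setting up the dictionary.} First I would fix the orthonormal bases $B_1,\ldots,B_{2n}$ of $\mathfrak b$ and $Z_1,\ldots,Z_m$ of $\mathfrak z$, and record that $[B_j,B_l]=\sum_{k=1}^m U_{j,l}^{(k)}Z_k$. For a basis vector $Z=Z_k$, the operator $J_{Z_k}$ is computed by $\langle J_{Z_k}(B_j),B_l\rangle=\langle Z_k,[B_j,B_l]\rangle=U_{j,l}^{(k)}$. Since $\{B_l\}$ is orthonormal, this says precisely that the matrix of $J_{Z_k}$ in this basis has entries $(J_{Z_k})_{l,j}=U_{j,l}^{(k)}$, i.e.\ $J_{Z_k}=\bigl(U^{(k)}\bigr)^T=-U^{(k)}$ using skew-symmetry, and more generally $J_{Z}=\sum_k \omega_k J_{Z_k}$ for $Z=\sum_k \omega_k Z_k$. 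This identification is the crux: it converts every statement about the $J$-operators into a statement about linear combinations of the $U^{(k)}$.

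\textbf{Deriving the matrix conditions from the $H$-type property.} Assuming $G$ is $H$-type, the defining relation already gives skew-symmetry of each $J_{Z_k}$, hence of each $U^{(k)}$. I would then expand the $H$-type condition $J_Z^2=-|Z|^2 I$ for $Z=\sum_k\omega_k Z_k$, obtaining $\sum_{k,l}\omega_k\omega_l J_{Z_k}J_{Z_l}=-\bigl(\sum_k\omega_k^2\bigr)I$ for all choices of $\omega_k$. Symmetrizing in the indices and comparing coefficients of $\omega_k^2$ and of $\omega_k\omega_l$ ($k\neq l$) separately yields the two conclusions: $J_{Z_k}^2=-I$, which combined with $J_{Z_k}^T=-J_{Z_k}$ forces $J_{Z_k}^TJ_{Z_k}=I$, so $U^{(k)}$ is orthogonal, proving $(a)$; and $J_{Z_k}J_{Z_l}+J_{Z_l}J_{Z_k}=0$ for $k\neq l$, which after translating through $J_{Z_k}=-U^{(k)}$ gives $U^{(k)}U^{(l)}+U^{(l)}U^{(k)}=0$, proving $(b)$. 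I would note here that $(a)$ already forces $2n$ to be even-dimensional with these orthogonal skew matrices, consistent with the M\'etivier/$H$-type structure.

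\textbf{The converse and the isomorphism claim.} For the reverse direction, given matrices $U^{(k)}$ satisfying $(a)$ and $(b)$, I would run the dictionary backwards: define the inner product making $\{B_j\}\cup\{Z_k\}$ orthonormal, set $J_{Z_k}:=-U^{(k)}$, and verify that conditions $(a)$ and $(b)$ reassemble into $J_Z^2=-|Z|^2 I$ by reversing the coefficient comparison above, so that the reconstructed group is genuinely $H$-type. The main obstacle I anticipate is not any single computation but rather the careful bookkeeping in the isomorphism statement: one must confirm that an abstract $H$-type group, under exponential coordinates and a suitable choice of orthonormal bases, realizes the group law (\ref{exp4}) with these specific matrices, and conversely that the group law (\ref{exp4}) with matrices satisfying $(a)$,$(b)$ produces an $H$-type Lie algebra. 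Since this theorem is quoted from \cite{BU}, I would in practice lean on that reference for the full isomorphism bookkeeping and present the matrix-condition equivalence as the computational heart of the proof, being careful that the sign conventions in the passage from $\langle J_Z X,Y\rangle=\langle Z,[X,Y]\rangle$ to the entries of $U^{(k)}$ are tracked consistently throughout.
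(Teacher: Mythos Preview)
The paper does not supply its own proof of this theorem: it is stated with the citation \cite{BU} and used as a black box (immediately afterward the paper simply invokes it to deduce that $\sum_j\lambda_jU^{(j)}=|\lambda|V$ for an orthogonal $V$). There is therefore nothing in the paper to compare your argument against.

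That said, your proof sketch is the standard and correct route. The dictionary $J_{Z_k}\leftrightarrow -U^{(k)}$ via $\langle J_{Z_k}B_j,B_l\rangle=U_{j,l}^{(k)}$ is exactly right, and expanding $J_Z^2=-|Z|^2 I$ quadratically in the coefficients $\omega_k$ and matching the diagonal and off-diagonal terms is precisely how one obtains $(a)$ and $(b)$; the converse is the same computation run backwards. Your caveat about the ``isomorphism bookkeeping'' is well placed: the only part requiring care beyond the linear-algebra identity is checking that exponential coordinates on a step-two nilpotent group with orthonormal bases chosen as in the paper really do produce the group law (\ref{exp4}), which is routine via Baker--Campbell--Hausdorff. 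If you were writing this up for the paper you could reasonably present the matrix-condition equivalence as you have and defer the coordinate identification to \cite{BU}, exactly as the authors do.
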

For $\lambda\in\mathfrak z\smallsetminus\{0\},$ it follows from Theorem \ref{th1}
that $\sum_{j=1}^m\lambda_jU^{(j)}=|\lambda|V,$ where $V$ is an orthogonal matrix.
This fact will enable us to deduce that $\lambda$-twisted spherical mean on $H$-type
group is similar to the $|\lambda|$-twisted spherical mean on the Heisenberg group.

\smallskip

Let $\mu_s$ be the normalized surface measure on the set $ \{(z, 0):~|z|=s\} \subset G.$
Then the partial spherical means of a function $F\in L^1_{loc}(G)$ can be defined by
\begin{align}\label{exp9}
F \ast \mu_s (z, t) = \int_{|w|=s}F((z,t)(-w,0))~d\mu_s(w).
\end{align}
Let $$F^\lambda(z)= \int_{\mathbb R^m} F(z,t)e^{i \lambda \cdot t} dt,$$ be the
inverse Fourier transform of $F$ in the $t$ variable. Then
\begin{equation}\label{exp10}
(F \ast \mu_s)^\lambda(z)=\int_{|w| = s}~F^\lambda (z-w)
e^{\frac{i}{2}\sum_{j=1}^m\lambda_j\text{Re}\,(z\cdot\,\overline{U^{(j)}w})}~d\mu_s(w).
\end{equation}
Define the $\lambda$-twisted spherical means of $f\in L^1(\mathbb C^n)$  by
\begin{align}\label{exp14}
f \times_\lambda \mu_s(z)=\int_{|w|=s}~f(z-w)~e^{\frac{i}{2}\sum_{j=1}^m\lambda_j
\text{Re}\,(z\cdot\,\overline{U^{(j)}w})}~d\mu_s(w).
\end{align}
From (\ref{exp10}) we get $(F \ast \mu_s)^\lambda=F^\lambda\times_\lambda\mu_s.$
Thus, the partial spherical mean $F\ast\mu_s$ on the M\'{e}tivier group $G$ can be
thought of the $\lambda$-twisted spherical mean $F^\lambda\times_\lambda\mu_s.$
Note that $\lambda$-twisted spherical mean (\ref{exp14}) is the complexification of the mean
\begin{align}\label{exp16}
f\times_\lambda \mu_s(x)=\int_{|\xi|=s}~f(x-\xi)~e^{\frac{i}{2}
\sum_{j=1}^m\lambda_j\langle x,U^{(j)}\xi\rangle}~d\mu_s(\xi).
\end{align}

\begin{definition}
Let $B_{r, R}=\{z\in \mathbb C^n: r<|z|< R\}$ be an open annulus in $\mathbb C^n,$
where $0\leq r<R\leq\infty.$  Let $Z_{r, R}$ be the space of all continuous
functions $f$ on $B_{r, R}$ such that $f\times_\lambda\mu_s(z)=0$ on the spheres
$S_s(z)\subset B_{r, R}$ and the ball $B_r(0)\subseteq B_s(z).$
\end{definition}

Let $Z_{r, R}^\infty$ be space of all smooth functions in $Z_{r, R}.$
Consider a smooth non-negative radial function $\phi$ on $\mathbb C^n,$
supported in $B_1(0)$ and $\int_{\mathbb C^n}\phi=1.$ When $\epsilon>0,$
write $\phi_\epsilon(z)=\epsilon^{-2n}\phi(\frac{z}{\epsilon}).$
For $f\in Z_{r, R}^\infty,$ define $S_\epsilon(f)$ by
\[S_\epsilon(f)(z)=\int_{\mathbb C^n}f(z-w)\phi_\epsilon(w)
e^{\frac{i}{2}\sum_{j=1}^m\lambda_j\text{Re}\,(z\cdot\,\overline{U^{(j)}w})}dw.\]
Then we can deduce that $S_\epsilon(f)\in Z_{r+\epsilon, R-\epsilon}^\infty.$
Since $\text{ supp }\phi_\epsilon\subseteq B_\epsilon(0),$ and
\begin{align*}
S_\epsilon(f)(z)-f(z)=\int_{|w|\leq\epsilon}&\phi_\epsilon(w)
e^{\frac{i}{2}\sum_{j=1}^m\lambda_j\text{Re}\,(z\cdot\,\overline{U^{(j)}w})}
(f(z-w)-f(z))dw\\
&+\int_{|w|\leq\epsilon}
(e^{\frac{i}{2}\sum_{j=1}^m\lambda_j\text{Re}\,(z\cdot\,\overline{U^{(j)}w})}-1)
f(z)\phi_\epsilon(w)dw,
\end{align*}
together with $f$ is continuous, letting $\epsilon$ goes to $0,$ it follows that
$S_\epsilon(f)$ converges to $f$ locally uniformly. Thus,
without loss of generality, we can assume the functions in $Z_{r, R}$
are smooth.

\smallskip

Since $Z_{r, R}$ is closed under small translation, it follow that
$Z_{r, R}$ will be invariant under the action of appropriate vector fields on $G.$

\smallskip

The left-invariant vector fields on $G$ are
\[X_j=\dfrac{\partial}{\partial x_j}+ \frac{1}{2}\sum_{k=1}^m\left(\sum_{l=1}^n
\left(x_lU_{l,j}^{(k)}+y_lU_{n+l,j}^{(k)}\right)\right)\dfrac{\partial}{\partial t_k},\]
\[Y_j=\dfrac{\partial}{\partial y_j}+ \frac{1}{2}\sum_{k=1}^m\left(\sum_{l=1}^n
\left(x_lU_{l,n+j}^{(k)}+y_lU_{n+l,n+j}^{(k)}\right)\right)\dfrac{\partial}{\partial t_k},\]
\[T_k=\dfrac{\partial}{\partial t_k},\text{ where }k=1,\ldots,m, ~j=1,\ldots,n,\]
and $(x,t)=(x_1,\ldots, x_n,y_1,\ldots,y_n,t_1,\ldots,t_m)\in \mathbb R^{2n}\times \mathbb R^m.$
In fact, they generate a basis of the Lie algebra of the M\'{e}tivier group $G$.
Given $U^{(s)}$'s are skew-symmetry, we obtain the following commutation relations
\[[X_i,X_j]=\sum_{k=1}^mU_{i,j}^{(k)}\dfrac{\partial}{\partial t_k},~[Y_i,Y_j]
=\sum_{k=1}^mU_{n+i,n+j}^{(k)}\dfrac{\partial}{\partial t_k},\text{ for } ~i,j=1,\ldots,n.\]
Since $U^{(1)},\ldots,U^{(m)}$ are linearly independent, the dimension of the space
spanned by $\{(U_{i,j}^{(1)},\ldots,U_{i,j}^{(m)}):~i,j=1,\ldots,n\}$ will be $m.$

\smallskip

Now, for $1\leq j\leq n,$ define
\begin{align*}
Z_j&=\frac{1}{2}(X_j-iY_j) \\
&=\dfrac{\partial}{\partial z_j}+\frac{1}{4}
\sum_{k=1}^m\sum_{l=1}^n\left\{x_l\left(U_{l,j}^{(k)}-iU_{l,n+j}^{(k)}\right)+
y_l\left(U_{n+l,j}^{(k)}-iU_{n+l,n+j}^{(k)}\right)\right\}\dfrac{\partial}{\partial t_k},\\
\bar{Z}_j&=\frac{1}{2}(X_j+iY_j) \\
&=\dfrac{\partial}{\partial \bar{z}_j}+\frac{1}{4}
\sum_{k=1}^m\sum_{l=1}^n\left\{x_l\left(U_{l,j}^{(k)}+iU_{l,n+j}^{(k)}\right)+
y_l\left(U_{n+l,j}^{(k)}+iU_{n+l,n+j}^{(k)}\right)\right\}\dfrac{\partial}{\partial t_k}.
\end{align*}
Consider the function $F$ on
$G=\mathbb C^n\times\mathbb R^m$ of type $F(z,t)=e^{i\lambda.t}f(z),$ where
$\lambda\in \mathfrak z\smallsetminus\{0\}.$ Then the vector fields $Z_j$ and $\bar{Z_j}$
reduce to
\begin{align}
Z_j^{\lambda}&=\dfrac{\partial}{\partial z_j}+\frac{1}{4}\sum_{l=1}^n
\left\{\left(\beta_l^{\lambda}+i\alpha_l^{\lambda}\right)z_l+
\left(-\beta_l^{\lambda}+i\alpha_l^{\lambda}\right)\bar{z}_l\right\} \nonumber \\
&=\dfrac{\partial}{\partial z_j}+\frac{1}{4}\nu_j\bar{z}_j
+\frac{1}{4}\sum_{\substack{l=1 \\ (l\neq j)}}^n (\eta_lz_l+\nu_l\bar{z}_l), \label{exp5} \\
\bar{Z}_j^{\lambda}&=\dfrac{\partial}{\partial \bar{z}_j}+\frac{1}{4}\sum_{l=1}^n
\left\{\left(\bar{\beta}_l^{\lambda}+i\bar{\alpha}_l^{\lambda}\right)z_l+
\left(-\bar{\beta}_l^{\lambda}+i\bar{\alpha}_l^{\lambda}\right)\bar{z_l}\right\} \nonumber \\
&=\dfrac{\partial}{\partial \bar{z}_j}-\frac{1}{4}\bar{\nu}_jz_j
-\frac{1}{4}\sum_{\substack{l=1 \\ (l\neq j)}}^n (\bar{\nu}_lz_l+\bar{\eta}_l\bar{z}_l), \label{exp6}
\end{align}
since $\eta_j=0,$ where we denote \[\alpha_l^{\lambda}=\frac{1}{2}\sum_{k=1}^m\lambda_k\left(U_{l,j}^{(k)}-iU_{l,n+j}^{(k)}\right),
\beta_l^{\lambda}=\frac{1}{2}\sum_{k=1}^m\lambda_k\left(U_{n+l,j}^{(k)}-iU_{n+l,n+j}^{(k)}\right)\]
and $\eta_l=\beta_l^{\lambda}+i\alpha_l^{\lambda},$ $\nu_l=-\beta_l^{\lambda}+i\alpha_l^{\lambda}$ for $1\leq l\leq n.$

\smallskip

The differential operators $Z_j^\lambda$ and $\bar{Z}_j^\lambda$ play a role of
left-invariant vector fields for $\lambda$-twisted convolution on $\mathbb C^n.$
That is, \[Z_j^\lambda(f\times_\lambda\mu_s)=Z_j^\lambda f\times_\lambda\mu_s
\text{ and } \bar{Z}_j^\lambda (f\times_\lambda\mu_s)=\bar{Z}_j^\lambda f\times_\lambda\mu_s.\]
As an effect, if $f\in Z_{r, R},$ then $Z_j^\lambda f$ and $\bar{Z}_j^\lambda f$ both are in $Z_{r, R}.$

\subsection{Bi-graded spherical harmonics}
We require the bi-graded spherical harmonic expansion of continuous function on
$\mathbb C^n.$  See \cite{D, Gr1, R, T2} for details.

For $p, q \in \mathbb Z_+,$ the set of all non-negative integers, let $P_{p,q}$ denote
the space of all polynomials $P$ in $z$ and $\bar{z}$ of the form
\[P(z)=\sum_{|\alpha|=p} \sum_{|\beta|=q}c_{\alpha\beta}~z^\alpha \bar{z}^\beta .\]
Write $H_{p,q}=\{P\in P_{p,q}:\Delta P=0\},$ where $\Delta$ stands for the Laplacian
on $\mathbb C^n.$ The elements of $H_{p,q}$ restricted to the unit sphere $S^{2n-1}$
are called bi-graded spherical harmonics. Now, we identify $H_{p,q}$ as the space of
bi-graded spherical harmonics on $S^{2n-1}.$ Let $\{Y_j^{p,q}:1\leq j\leq d_{p,q}\}$
be an orthonormal basis of $H_{p,q}.$ By Peter-Weyl theorem the set
$\{Y_j^{p,q}:1\leq j\leq d_{p,q},~p,q\in\mathbb Z_+\}$ forms an orthonormal basis
for $L^2(S^{2n-1}),$ and hence a continuous function $f$ on $\mathbb C^n$ can be expressed as
\begin{align}\label{exp21}
f(\rho\omega)=\sum_{p,q}\sum_{j=1}^{d_{p,q}}~a_j^{p,q}(\rho)~Y_j^{p,q}(\omega),
\end{align}
where $\rho>0,~\omega\in S^{2n-1},$ and $~a_j^{p,q} $ are called the spherical
harmonic coefficients of $f$. The $(p,q)^{th}$ projection of $f$ is
given by
\begin{align}\label{exp22}
\Pi_{p,q}(f)(\rho,\omega)=~\sum_{j=1}^{d_{p,q}}~a_j^{p,q}(\rho)~Y_j^{p,q}(\omega).
\end{align}

We need the following lemma to decompose a homogeneous
polynomial into homogeneous harmonic polynomials.

\begin{lemma}\emph{\cite{T2}}\label{lemma2}
Every $P\in P_{p,q}$ can be uniquely expressed as
$P(z)=P_0(z)+|z|^2P_1(z)+\cdots+|z|^{2l}P_l(z),$ where $P_k\in
H_{p-k, q-k}$ and $l\leq min(p,q).$
\end{lemma}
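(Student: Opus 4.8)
The plan is to realize this as the classical Fischer (harmonic) decomposition adapted to the bigraded setting, built by peeling off one harmonic layer at a time. Writing the Laplacian on $\mathbb{C}^n$ in complex coordinates as $\Delta=4\sum_{j=1}^n\partial_{z_j}\partial_{\bar z_j}$, one observes that $\Delta$ lowers both degrees by one, so it maps $P_{p,q}$ into $P_{p-1,q-1}$, while multiplication by $|z|^2=\sum_{j=1}^n z_j\bar z_j$ raises both degrees by one, mapping $P_{p-1,q-1}$ into $P_{p,q}$. By definition $H_{p,q}=\ker(\Delta|_{P_{p,q}})$. The heart of the argument is the single-step splitting
\[
P_{p,q}=H_{p,q}\oplus |z|^2 P_{p-1,q-1},
\]
from which the full statement will follow by iteration.

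To prove this splitting I would introduce the Fischer inner product on polynomials: for $P=\sum c_{\alpha\beta}z^\alpha\bar z^\beta$ and $Q=\sum d_{\alpha\beta}z^\alpha\bar z^\beta$ set $\langle P,Q\rangle_F=\sum_{\alpha,\beta}\alpha!\,\beta!\,c_{\alpha\beta}\overline{d_{\alpha\beta}}$. This form is positive definite, and a direct computation on monomials shows that multiplication by $z_j$ (resp. $\bar z_j$) is the adjoint of $\partial_{z_j}$ (resp. $\partial_{\bar z_j}$). Consequently multiplication by $|z|^2$ is, up to the positive constant $4$, the adjoint of $\Delta$; that is, $\langle |z|^2 R,\,Q\rangle_F=\tfrac14\langle R,\,\Delta Q\rangle_F$ for all $R\in P_{p-1,q-1}$ and $Q\in P_{p,q}$. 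Two facts then drop out. First, the map $R\mapsto |z|^2 R$ is injective (polynomials form an integral domain), so its adjoint $\Delta$ is surjective onto $P_{p-1,q-1}$; by rank--nullity $\dim P_{p,q}=\dim H_{p,q}+\dim P_{p-1,q-1}$. Second, $H_{p,q}$ is orthogonal to $|z|^2P_{p-1,q-1}$, since for harmonic $h$ one has $\langle h,|z|^2 R\rangle_F=\tfrac14\langle\Delta h,R\rangle_F=0$; in particular the two subspaces meet only in $\{0\}$. Since their dimensions sum to $\dim P_{p,q}$, the orthogonal direct sum above follows.

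With the one-step splitting in hand, I would iterate, replacing $(p,q)$ by $(p-1,q-1)$ and so on, to obtain after $k$ steps
\[
P_{p,q}=\bigoplus_{i=0}^{k-1}|z|^{2i}H_{p-i,q-i}\ \oplus\ |z|^{2k}P_{p-k,q-k}.
\]
The recursion terminates at $k=l:=\min(p,q)$, because then one of the two indices of $P_{p-l,q-l}$ is zero; say $q\leq p$, so that $P_{p-l,0}$ consists of purely holomorphic polynomials, each of which satisfies $\Delta=0$ and is already harmonic, i.e. $P_{p-l,0}=H_{p-l,0}$. This yields $P(z)=P_0(z)+|z|^2P_1(z)+\cdots+|z|^{2l}P_l(z)$ with $P_k\in H_{p-k,q-k}$; existence is immediate from the direct sum, and uniqueness is forced by its directness.

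The step I expect to be the main obstacle is verifying the adjointness $\langle |z|^2 R,Q\rangle_F=\tfrac14\langle R,\Delta Q\rangle_F$ and extracting from it both the surjectivity of $\Delta$ and the orthogonality of the image of $|z|^2$: once the Fischer pairing is set up correctly these are short, but they are exactly where positive definiteness and the integral-domain property are used in tandem. Everything after the single-step splitting is a routine induction on the bidegree.
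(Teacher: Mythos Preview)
Your argument is correct: the Fischer inner product makes $\tfrac14\Delta$ the adjoint of multiplication by $|z|^2$, injectivity of the latter forces surjectivity of the former, and the resulting one-step orthogonal splitting $P_{p,q}=H_{p,q}\oplus|z|^2P_{p-1,q-1}$ iterates down to $l=\min(p,q)$. The paper itself does not prove this lemma; it is quoted from \cite{T2} (Thangavelu), where essentially the same Fischer-decomposition argument is given, so your approach coincides with the standard one in the cited reference.
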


\begin{corollary}\label{corol1}\emph{\cite{RS}}
Let $P\in H_{p,q}.$ Then it follows that
\begin{align*}
\bar{z_j}P(z)=P_0(z)+\gamma_{p,q}|z|^2\frac{\partial{P}}{\partial z_j}, \,
z_jP(z)=P_0'(z)+\gamma_{p,q}|z|^2\frac{\partial{P}}{\partial \bar{z}_j},
\end{align*}
where $\gamma_{p,q}=\frac{1}{(n+p+q-1)}$, $P_0\in H_{p, q+1}$ and $P_0'\in H_{p+1, q}.$
\end{corollary}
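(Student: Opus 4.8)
The plan is to reduce the whole statement to one application of the Laplacian combined with the uniqueness in Lemma~\ref{lemma2}. Throughout I use that on $\mathbb{C}^n$ one has $\Delta=4\sum_{k=1}^n\partial_{z_k}\partial_{\bar z_k}$ and that $\Delta$ commutes with each $\partial_{z_j}$ and $\partial_{\bar z_j}$, being a constant-coefficient operator. Since $P\in H_{p,q}$ is harmonic and homogeneous of bidegree $(p,q)$, this gives at once that $\partial_{z_j}P$ is harmonic of bidegree $(p-1,q)$, i.e. $\partial_{z_j}P\in H_{p-1,q}$, and symmetrically $\partial_{\bar z_j}P\in H_{p,q-1}$.

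First I would compute $\Delta(\bar z_j P)$ directly: from $\partial_{\bar z_k}(\bar z_j P)=\delta_{kj}P+\bar z_j\partial_{\bar z_k}P$, applying $\partial_{z_k}$ and summing over $k$, the term $\bar z_j\Delta P$ drops out by harmonicity of $P$ and one is left with $\Delta(\bar z_j P)=4\,\partial_{z_j}P$. The same computation yields $\Delta(z_j P)=4\,\partial_{\bar z_j}P$.

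Next, by Lemma~\ref{lemma2} I write the bidegree-$(p,q+1)$ polynomial $\bar z_j P$ in its harmonic decomposition $\bar z_j P=\sum_{k\ge 0}|z|^{2k}P_k$, with $P_k\in H_{p-k,q+1-k}$ and $P_0\in H_{p,q+1}$. A short calculation shows that for any harmonic $Q\in H_{a,b}$ one has $\Delta(|z|^{2k}Q)=4k(n+a+b+k-1)\,|z|^{2k-2}Q$, using Euler's relations $\sum_k z_k\partial_{z_k}Q=aQ$, $\sum_k\bar z_k\partial_{\bar z_k}Q=bQ$ and $\Delta Q=0$; in particular this coefficient is nonzero for $k\ge 1$. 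Hence applying $\Delta$ carries each layer $|z|^{2k}P_k$ into the layer $|z|^{2k-2}P_k$, and these land in \emph{distinct} Fischer layers. Comparing with the first step, $\Delta(\bar z_j P)=4\,\partial_{z_j}P$ is purely harmonic of bidegree $(p-1,q)$, so it occupies only the bottom layer. The uniqueness in Lemma~\ref{lemma2} then forces $P_k=0$ for every $k\ge 2$ and identifies the surviving term through $\Delta(|z|^2 P_1)=4(n+p+q-1)P_1=4\,\partial_{z_j}P$, whence $P_1=\frac{1}{n+p+q-1}\partial_{z_j}P=\gamma_{p,q}\,\partial_{z_j}P$. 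This gives $\bar z_j P=P_0+\gamma_{p,q}|z|^2\partial_{z_j}P$. The second identity follows verbatim from the symmetric computation with $z_jP$ and $\partial_{\bar z_j}P\in H_{p,q-1}$, the relevant constant being the same $n+p+q-1$, which is symmetric in $p$ and $q$.

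The main obstacle is precisely the step that kills the higher-order terms $P_k$ for $k\ge 2$; the rest is bookkeeping with derivatives. Conceptually this is the statement that multiplication by $|z|^2$, $\Delta$, and the Euler operator generate an $\mathfrak{sl}_2$-action for which the spaces $|z|^{2k}H_{a,b}$ are distinct weight layers and $\Delta$ is injective off the bottom one; the explicit constant $4k(n+a+b+k-1)$ computed above is exactly the certificate of that injectivity and simultaneously pins down $\gamma_{p,q}$.
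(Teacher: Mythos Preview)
Your argument is correct. The paper itself does not give a proof of this corollary; it simply records the statement with a citation to \cite{RS} and places it immediately after Lemma~\ref{lemma2}, signalling that it is meant to follow from that decomposition. Your proof does exactly this: you invoke the unique Fischer decomposition of Lemma~\ref{lemma2} for $\bar z_jP\in P_{p,q+1}$, compute $\Delta(\bar z_jP)=4\,\partial_{z_j}P$ and $\Delta(|z|^{2k}Q)=4k(n+a+b+k-1)|z|^{2k-2}Q$ for $Q\in H_{a,b}$, and then match layers. One small remark for completeness: when you compare the two sides, you are implicitly using that $\sum_{k\ge1}4k(n+p+q-k)|z|^{2(k-1)}P_k$ is itself the Fischer expansion (in $P_{p-1,q}$) of $4\,\partial_{z_j}P$, which is legitimate because $P_k\in H_{p-k,\,q+1-k}=H_{(p-1)-(k-1),\,q-(k-1)}$ and because the coefficients $k(n+p+q-k)$ are nonzero for $1\le k\le\min(p,q+1)$ (indeed $n+p+q-k\ge n+q\ge1$). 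With that observation your identification $P_1=\gamma_{p,q}\,\partial_{z_j}P$ and $P_k=0$ for $k\ge2$ is fully justified, and the second identity follows by the same computation with $z_jP$.
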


\section{Results on M\'{e}tivier groups}\label{section3}
\subsection{Characterization of certain continuous functions}\label{subsec}
As we know that the $\lambda$-twisted spherical mean of the M\'{e}tivier group
is not $U(n)$-invariant, we require to modify the space $Z_{r, R}$ appropriately.

\smallskip
We first recall the following fact from Geller \cite{GE}. The operator analog of a
bi-graded harmonic polynomial can be identical to the polynomial itself. Now for
our purpose, we assume $P_j^{p,q}(z)=|z|^{p+q}Y_j^{p,q}(\frac{z}{|z|})$ contains the
term $z^\alpha \bar{z}^\beta,$ for some multi-index $\alpha,\beta\in\mathbb Z_+^n$
with $|\alpha|=p$ and $|\beta|=q.$ By abuse of notation, we denote
\begin{align}\label{exp40}
P_j^{p,q}(Z)=Z^\alpha\bar{Z}^\beta,
\end{align} where $Z^\alpha=(Z_1^\lambda)^{\alpha_1}\cdots (Z_n^\lambda)^{\alpha_n}$
and $\bar{Z}^\beta =(\bar{Z}_1^\lambda)^{\beta_1}\cdots (\bar{Z}_n^\lambda)^{\beta_n}.$
Let $\tilde{a}_j^{p,q}(\rho)=\rho^{-(p+q)}a_j^{p,q},$ where $a_j^{p,q}$ as appears in (\ref{exp21}).
Let $Z^\ast_{r, R}$ be the space of smooth functions $f$ on $B_{r, R}$ satisfying  the conditions
\[\Pi_{0,0}\left(P_j^{0,q}(Z)\left(\Pi_{0,q}\left(P_j^{p,0}(Z)\left(\tilde{a}_j^{p,q}P_j^{p,q}
\right)\right)\right)\right)\in Z_{r, R}\] for all $p,q\in \mathbb{Z}_+$ and $1\leq j\leq d_{p,q}.$

\smallskip

Now, we fix some notations for our convenience. Denote $D_j=\rho\frac{\partial}{\partial\rho}+\frac{\nu_j}{2}\rho^2$
and $\bar{D}_j=\rho\frac{\partial}{\partial\rho}-\frac{\bar{\nu}_j}{2}\rho^2,$
where $\nu_j$ is defined in (\ref{exp6}). For multi-index $\alpha,\beta,$ define
{\footnotesize
\[D^\alpha=\prod_{i_1=1}^{\alpha_1}(\kappa_{1,i_1}D_1+2)\cdots\prod_{i_n=1}^{\alpha_n}(\kappa_{n,i_n}D_n+2)
\text{ and }
\bar{D}^\beta=\prod_{j_1=1}^{\beta_1}(\tilde{\kappa}_{1,j_1}\bar{D}_1+2)\cdots\prod_{j_n=1}^{\beta_n}
(\tilde{\kappa}_{n,j_n}\bar{D}_n+2),\]}
where $\kappa_{l,i_l},\tilde{\kappa}_{k,j_k}\in \{\gamma_{p',q'}=\frac{1}{(n+p'+q'-1)}:0\leq p'\leq p,0\leq q'\leq q\}.$

\smallskip

In order to prove the result for the functions in $Z^*_{r, R},$ it would be enough to
consider the following theorem.
\begin{theorem}\label{th2}
Let $f(z)=\tilde{a}(\rho)P_{p,q}(z),$ where $\rho=|z|$ and $P_{p,q}\in H_{p,q}.$ Then a
necessary condition for $f\in Z^\ast_{r, R}$ is that $\tilde{a}$ satisfies the ODE
\[\left(\sum\limits_{|\beta|+k=q}d_{\beta,k}\rho^{2k}\bar{D}^\beta\right)
\left(\sum\limits_{|\alpha|+l=p}c_{\alpha,l}\rho^{2l}D^\alpha \right) \tilde{a}=0\]
for some scalars $c_{\alpha,l},d_{\beta,k}\in \mathbb C.$

In particular, if $P_{p,q}(z)=z_{l_1}^p\bar{z}_{l_2}^q$ for some $1\leq l_1,l_2 \leq n,$
then there exist $A_i,B_k\in\mathbb{C}$ such that
\[\tilde{a}(\rho)=\sum_{i=0}^p A_ie^{-\frac{\nu_{l_1}}{4}\rho^2}\rho^{-2(p+q+n-i)}
+\sum_{k=0}^q B_ke^{\frac{\nu_{l_2}}{4}\rho^2}\rho^{-2(p+q+n-k)},\]
where $r<\rho<R$ and $A_0=B_0=0.$
\end{theorem}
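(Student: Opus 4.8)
The plan is to exploit that $Z_{r,R}$, and hence $Z^{*}_{r,R}$, is invariant under the twisted vector fields $Z_j^\lambda,\bar Z_j^\lambda$, so that the entire content of ``$f\in Z^{*}_{r,R}$'' is encoded in the single radial function produced by the operators in the definition of $Z^{*}_{r,R}$. I would first isolate the elementary building block: for radial $g(\rho)$ and $P\in H_{p,q}$, compute $\Pi_{p-1,q}\big(Z_l^\lambda(gP)\big)$ by using Corollary \ref{corol1} to split each of $\partial_{z_l}(gP)$ and the multiplication terms $\bar z_{l'}gP,\ z_{l'}gP$ into harmonic components. The gradient terms $\gamma_{p,q}|z|^2\partial_{z_{l'}}P$ are exactly what feed bidegree $(p-1,q)$, and collecting radial coefficients turns the combination $g+\frac{\gamma_{p,q}}{2}\rho g'+\frac{\gamma_{p,q}}{4}\nu_l\rho^2 g$ into $\frac12(\gamma_{p,q}D_l+2)g$, while the off-diagonal derivatives $\partial_{z_{l'}}P$ ($l'\neq l$) carry a bare factor $\frac{\gamma_{p,q}}{4}\nu_{l'}\rho^2$. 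This is the step where $D_l,\bar D_l$ are born.

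The second ingredient is a bidegree count. Each application of a $Z_l^\lambda$ (resp. $\bar Z_l^\lambda$) shifts the bidegree of a harmonic component by one of $(-1,0),(0,+1),(+1,0),(0,-1)$, the last two coming from the $z_{l'}$-multiplications. To pass from $(p,q)$ to $(0,q)$ in exactly $p$ steps the only admissible itinerary is $p$ steps of type $(-1,0)$; likewise $(0,q)\to(0,0)$ forces $q$ steps of type $(0,-1)$. Hence in the composition defining $Z^{*}_{r,R}$ only the pure lowering branches survive the intermediate projection $\Pi_{0,q}$ and the final $\Pi_{0,0}$, and iterating the building block gives
\[
\Pi_{0,0}\Big(\bar Z^\beta\big(\Pi_{0,q}\big(Z^\alpha(\tilde a\,P_{p,q})\big)\big)\Big)=\Big(\sum_{|\beta|+k=q}d_{\beta,k}\rho^{2k}\bar D^\beta\Big)\Big(\sum_{|\alpha|+l=p}c_{\alpha,l}\rho^{2l}D^\alpha\Big)\tilde a ,
\]
the sums arising because at each step one may pick either the diagonal factor $(\gamma_{\cdot}D_l+2)$ or an off-diagonal factor proportional to $\rho^2\nu_{l'}$. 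This function is radial and, by hypothesis, lies in $Z_{r,R}$; since the radial ($(0,0)$) case is already settled — vanishing of the $\lambda$-twisted spherical mean over the relevant spheres forces a radial function to vanish, as in \cite{RS} — it must be identically zero, which is precisely the asserted ODE $L_2L_1\tilde a=0$.

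For $P_{p,q}=z_{l_1}^p\bar z_{l_2}^q$ only the diagonal derivatives $\partial_{z_{l_1}},\partial_{\bar z_{l_2}}$ are nonzero, so $L_1=\frac{p!}{2^p}\prod_{k=1}^p(\gamma_{k,q}D_{l_1}+2)$ and $L_2=\frac{q!}{2^q}\prod_{k=1}^q(\gamma_{0,k}\bar D_{l_2}+2)$; a short computation with the skew-symmetry of the $U^{(k)}$ shows the diagonal coefficients $\nu_{l_1},\nu_{l_2}$ are real, so $\bar\nu_{l_2}=\nu_{l_2}$. To solve $L_2L_1\tilde a=0$ I would conjugate away the quadratic terms via $D_{l_1}=e^{-\nu_{l_1}\rho^2/4}(\rho\partial_\rho)e^{\nu_{l_1}\rho^2/4}$ and $\bar D_{l_2}=e^{\nu_{l_2}\rho^2/4}(\rho\partial_\rho)e^{-\nu_{l_2}\rho^2/4}$, turning each factor into $(\gamma\rho\partial_\rho+2)$ conjugated by a Gaussian; since $\rho^{-2s}$ is an eigenfunction of $\rho\partial_\rho$ with eigenvalue $-2s$, the $k$-th factor has kernel a Gaussian times $\rho^{-2(n+k+q-1)}$. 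This produces the first family $\ker L_1=\mathrm{span}\{e^{-\nu_{l_1}\rho^2/4}\rho^{-2(p+q+n-i)}:1\le i\le p\}$. The remaining $q$ solutions solve $L_1\tilde a\in\ker L_2$, where the key point is a resonance: the constant term $2-2s\gamma_{k,q}$ of a factor vanishes exactly at $s=1/\gamma_{k,q}$, so applying $L_1$ to $e^{\nu_{l_2}\rho^2/4}\rho^{-2(p+q+n-k)}$ telescopes — each factor either annihilates the leading power or lowers it by one — until the output lands in $\ker L_2$. A short induction on $p$ confirms the second family $e^{\nu_{l_2}\rho^2/4}\rho^{-2(p+q+n-k)}$, $1\le k\le q$. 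These $p+q$ functions are linearly independent (distinct Gaussian--power profiles, using the distinctness of the eigenvalues of $U_\lambda$) and are annihilated by the order-$(p+q)$ operator $L_2L_1$, hence span its kernel, giving the stated formula with $A_0=B_0=0$.

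I expect the main obstacle to be the bookkeeping in the general-$P$ recursion: tracking the off-diagonal contributions $\frac{\gamma}{4}\nu_{l'}\rho^2\partial_{z_{l'}}P$ across all $p$ (resp. $q$) iterations so as to recognise the outcome as $\sum c_{\alpha,l}\rho^{2l}D^\alpha$, and verifying that $\Pi_{0,q}$ genuinely discards every non-lowering branch. The resonance--telescoping identity behind the second family, though elementary once set up, is the other delicate point, and it is exactly where the reality of $\nu_{l_1},\nu_{l_2}$ and the hypothesis of distinct eigenvalues enter.
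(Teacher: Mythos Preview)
Your approach is essentially the paper's: apply $Z_j^\lambda$, use Corollary~\ref{corol1} to split into harmonic pieces, project onto $(p-1,q)$, iterate, and evaluate the resulting radial function in $Z_{r,R}$ at $z=0$. Your bidegree-path argument for why only the pure lowering branches survive the projections $\Pi_{0,q}$ and $\Pi_{0,0}$ is in fact a cleaner justification than the paper provides (after working out the $p=1$ case it simply asserts the form $\sum c_{\alpha,l}\rho^{2l}D^\alpha$ of the resulting operator). Likewise, your Gaussian conjugation $D_{l_1}=e^{-\nu_{l_1}\rho^2/4}(\rho\partial_\rho)e^{\nu_{l_1}\rho^2/4}$ and the telescoping for $P=z_{l_1}^p\bar z_{l_2}^q$ make explicit what the paper leaves implicit --- it writes down the product ODE and then simply states ``a solution to the above equation can be expressed as'' the claimed sum.

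One inaccuracy to fix: you invoke ``the distinctness of the eigenvalues of $U_\lambda$'' for linear independence of the $p+q$ candidate solutions, but there is no such hypothesis in the theorem (for $H$-type groups all $\mu_{\lambda,j}$ coincide), and the diagonal constants $\nu_{l}$ are not those eigenvalues in any case. Fortunately the statement only asks that $\tilde a$ be \emph{some} combination of the listed functions; even in the degenerate situation $\nu_{l_1}+\nu_{l_2}=0$, where the two Gaussian prefactors agree and your $p+q$ functions become linearly dependent, the $(p+q)$-dimensional kernel of $L_2L_1$ is still contained in $e^{-\nu_{l_1}\rho^2/4}\cdot\mathrm{span}\{\rho^{-2s}:n\le s\le n+p+q-1\}$, which is exactly the span of the listed functions (with redundant parameters $A_i,B_k$). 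So drop the eigenvalue remark and the plan goes through.
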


\begin{proof}
For $p=q=0,$ we have $\tilde{a}(\rho)=\tilde{a}\times_\lambda\mu_\rho(0)=0,$
whenever $r<\rho<R.$
To proceed the other cases, we need to apply the operator $Z_j^{\lambda}$ to $f$.
\[Z_j^{\lambda}f=\dfrac{\partial f}{\partial z_j}+\frac{1}{4}\nu_j\bar{z}_j f+\frac{1}{4}
\sum_{\substack{l=1 \\ (l\neq j)}}^n (\eta_lz_l+\nu_l\bar{z}_l)f.\]
Given that $f=\tilde{a}P,$ the above equation will take the form
\begin{equation}\label{exp25}
Z_j^{\lambda}f=\frac{1}{2\rho^2}(D_j\tilde{a})\bar{z}_jP(z)+\tilde{a}\dfrac{\partial P}{\partial z_j}
+\frac{1}{4}\sum_{l\neq j}(\eta_lz_l+\nu_l\bar{z}_l)\tilde{a}P(z).
\end{equation}
Substituting the values of $\bar{z}_jP(z)$ and $z_jP(z)$ from Corollary \ref{corol1}, we have
\begin{align*}
Z_j^{\lambda}f &=\frac{1}{2\rho^2}D_j\tilde a\left(P_0+\gamma_{p,q}|z|^2\frac{\partial{P}}{\partial z_j}\right)
+\tilde{a}\frac{\partial P}{\partial z_j} \\
&+\frac{1}{4}\sum_{l\neq j}\left[\eta_l \tilde{a}\left(P_0'+\gamma_{p,q}|z|^2
\frac{\partial{P}}{\partial \bar{z_l}}\right)+\nu_l\tilde{a}\left(P_0+\gamma_{p,q}|z|^2
\frac{\partial{P}}{\partial z_l}\right)\right].
\end{align*}
After rearranging the terms, we get
\begin{align*}
Z_j^{\lambda}f&=\frac{1}{2\rho^2}D_j\tilde{a}P_0
+\frac{1}{4}\sum_{l\neq j}\nu_l\tilde{a}P_0 +\frac{1}{4}\sum_{l\neq j}\eta_l \tilde{a}P_0' \\
&+\frac{1}{2}\left(\gamma_{p,q}D_j+2\right)\tilde{a}\dfrac{\partial P}{\partial z_j}
+\frac{1}{4}\rho^2\gamma_{p,q}\sum_{l\neq j}\left(\eta_l\frac{\partial P}{\partial \bar{z}_l}
+\nu_l\frac{\partial P}{\partial {z}_l}\right)\tilde{a}.
\end{align*}
Now, the projection $\Pi_{p-1,q}$ of $Z^\lambda_jf$ is given by
\begin{align*}
\Pi_{p-1,q}~Z_j^{\lambda}f=\frac{1}{2}\left(\gamma_{p,q}\,D_j+2\right)\tilde{a}\dfrac{\partial P}{\partial z_j}
+\frac{1}{4}\rho^2\gamma_{p,q}\sum_{l\neq j} \nu_l\dfrac{\partial P}{\partial {z}_l}\tilde{a}.
\end{align*}
If $p=1$ and $q=0,$ then $\dfrac{\partial P}{\partial z_j}$ is a non-zero constant for some $j$, say $\zeta_j.$ Thus,
\begin{align}\label{exp26}
\Pi_{0,0}~Z_j^{\lambda}f&=\left\{\dfrac{1}{2n}\left(\rho\dfrac{\partial}{\partial\rho}
+\dfrac{\nu_j}{2}\rho^2 \right)+1\right\}\tilde{a}~\zeta_j
+\dfrac{1}{4}\sum_{l\neq j}\dfrac{\rho^2 \nu_l}{n}\tilde{a}~\zeta_l \\
&=\zeta_j\left\{\dfrac{1}{2n}\left(\rho\dfrac{\partial}{\partial\rho}
+\left(\dfrac{\nu_j}{2}+\sum_{l\neq j}\frac{ \nu_l}{2}\frac{\zeta_l}{\zeta_j}\right)\rho^2 \right)+1\right\}\tilde{a} \nonumber\\
&=\zeta_j\left\{\dfrac{1}{2n}\left(\rho\dfrac{\partial}{\partial\rho}+\frac{d_1}{2}\rho^2 \right)
+1\right\}\tilde{a}(\rho), \nonumber
\end{align}
where $d_1=(\nu_j+\sum_{l\neq j}\nu_l\frac{\zeta_l}{\zeta_j}).$ By definition of $Z^*_{r, R},$
$\Pi_{0,0}(Z_j^{\lambda}f) \in Z_{r, R}.$ Evaluating $\lambda$-twisted spherical
mean of $\Pi_{0,0}(Z_j^{\lambda}f)$ at $z=0$, we get
\[\left\{\frac{1}{2n}\left(\rho\dfrac{\partial}{\partial\rho}
+\frac{d_1}{2}\rho^2 \right)+1\right\}\tilde{a}(\rho)=0.\]
By replacing $\tilde{a}(\rho)=e^{-\frac{d_1}{4}\rho^2}\tilde{a}'(\rho)$ in the above equation, we get
\[e^{-\frac{d_1}{4}\rho^2}\left\{\frac{1}{2n}\rho\dfrac{\partial}{\partial\rho}+1\right\}\tilde{a}'(\rho)=0.\]
Thus, for $p=1,q=0$ we infer that
\[\tilde{a}(\rho)= A_1e^{-\frac{d_1}{4}\rho^2}\rho^{-2n}.\]
However, for the case $q=0$ and $p\geq 2,$ it would be difficult to solve the ODE. For instance,
consider $P(z)=z_1z_2.$ Then, after applying $Z_1Z_2$ to $\tilde a P,$ we get
\[\left\{(\gamma_{1,0}D_1+2)(\gamma_{2,0}D_2+2)+c_1c_2\rho^4\right\}\tilde{a}=0,\]
which we yet to solve.

For $p\geq 2$ and $q=0,$ the function  $\Pi_{0,0}\left(P^{p,0}(Z)\left(\tilde{a}^{p,0}P^{p,0}\right)\right)\in Z_{r, R},$
by evaluating its $\lambda$-twisted spherical mean at $z=0,$ we can infer that $\tilde{a}$ satisfies
\[\sum\limits_{|\alpha|+l=p}c_{\alpha,l}\rho^{2l}D^\alpha\tilde{a}=0.\]

By similar argument for $p=0$ and $q\geq 1,$ we get
\[\sum\limits_{|\beta|+k=q}d_{\beta,k}\rho^{2k}\bar{D}^\beta\tilde{a}=0.\]
In general, while $p,q\geq 1,$ we conclude that
\[\left(\sum\limits_{|\beta|+k=q}d_{\beta,k}\rho^{2k}\bar{D}^\beta\right)
\left(\sum\limits_{|\alpha|+l=p}c_{\alpha,l}\rho^{2l}D^\alpha \right) \tilde{a}=0.\]

However, if $P(z)$ is of the form $z_{l_1}^p\bar{z}_{l_2}^q$, then we can express $\tilde{a}$
explicitly as earlier. For this, first consider the case $q=0$ and $p\geq 1.$ Since
$\tilde a\,z_{l_1}^p\in Z^*_{r, R},$ it follows that $\Pi_{0,0}Z_{l_1}^p(\tilde{a}P)\in Z_{r, R}.$
Thus, by evaluating $\lambda$-twisted spherical mean of $\Pi_{0,0}Z_{l_1}^p(\tilde{a}P)$ at $z=0,$ we get
\[\prod_{i=1}^p\left(\gamma_{p-(i-1),0}~D_{l_1}+2\right)\tilde{a}=0.\]
This, in turn, implies that
\[\tilde{a}(\rho)=\sum^p_{i=1}A_ie^{-\frac{\nu_{l_1}}{4}\rho^2}\rho^{-2(n+p-i)}.\]
Similarly, for $p=0$ and $q\geq 1,$ by considering the operator  $\bar{Z}_{l_2}^\lambda,$
we can derive that
\[\tilde{a}(\rho)=\sum_{k=1}^q B_ke^{\frac{\bar{\nu}_{l_2}}{4}\rho^2}\rho^{-2(n+q-k)}.\]
If $p, q\geq1,$ by evaluating $\lambda$-twisted spherical mean of
$\Pi_{0,0}\bar{Z}_{l_2}\Pi_{0,q}Z_{l_1}^p(\tilde{a}P)$ at $z=0,$ we obtain
\[\prod_{k=1}^q\left(\gamma_{p,q+(k-1)}~\bar{D}_{l_2}+2\right)
\prod_{i=1}^p\left(\gamma_{p-(i-1),q}~D_{l_1}+2\right)\tilde{a}=0.\]
Hence, a solution to the above equation can be expressed as
\[\tilde{a}(\rho)=\sum_{i=1}^p~A_{i}~e^{-\frac{\nu_{l_1}}{4}\rho^2}\rho^{-2(n+p+q-i)}+
~\sum_{k=1}^q~B_{k}~e^{\frac{\nu_{l_2}}{4}\rho^2}\rho^{-2(n+p+q-k))}.\]
This completes the proof.
\end{proof}

\begin{remark}\label{rk1}
In the definition of $Z^*_{r, R}$ we have assumed that, for all $p,q\in \mathbb{Z}_+$ and $1\leq j\leq d_{p,q},$
\begin{align}\label{exp17}
\Pi_{0,0}\left(P_j^{0,q}(Z)\left(\Pi_{0,q}\left(P_j^{p,0}(Z)\left(\tilde{a}_j^{p,q}P_j^{p,q}
\right)\right)\right)\right)\times_\lambda\mu_s(z)=0
\end{align}
for all $z\in \mathbb C^n$ and $s >0$ with $S_s(z)\subseteq B_{r, R}$
and $B_r(0)\subseteq B_s(z).$ However, for a proof of Theorem \ref{th2},
it is enough to assume that (\ref{exp17}) holds for $z=0,$ whenever $r<s<R.$
Consequently, sufficient part of Theorem \ref{th2}, at $z=0,$ is obviously true.

Further, as compared to the Heisenberg group, it would be a reasonable question
to consider $e^{\frac{c}{4}|z|^2}|z|^{-2(n+p+q-i)}P(z)$ to be in $Z_{r,\infty}$
for appropriate choice of $c$ and $i,$ where $P\in H_{p,q}.$
In general, the matrix  $\sum_{j=1}^m \lambda_jU^{(j)},$ arises from the
symplectic form, has distinct eigenvalues, makes the difficulty to find out
such $c.$ However, in the case of $H$-type group, all the eigenvalues
are identical, we have such a result in Section \ref{section4}, Theorem \ref{th6}.

\end{remark}

\subsection{Injectivity and support theorem}
In this section, we simplify the $\lambda$-twisted spherical mean on
the M\'{e}tivier group to another mean, which is similar to the TSM
on the Heisenberg group. This will ease to prove support theorem for
the $\lambda$-twisted spherical mean on  M\'{e}tivier group, for the
type function. Further, we prove that a non-harmonic complex cone
aligned with one of the coordinate axes in $\mathbb{C}^n$ is a set
of injectivity for the $\lambda$-twisted spherical mean on the
M\'{e}tivier groups.

\smallskip

For $\lambda\in \mathbb{R}^m\setminus \{0\},$ the skew symmetric matrix
$V_\lambda=\sum_{j=1}^m \lambda_jU^{(j)}$ is non-singular (see \cite{MS}).
Let $u_1\pm iv_1,\ldots,u_n\pm iv_n$ be the eigenvectors of $V_\lambda$
with corresponding eigenvalues $\pm i\mu_{\lambda,1},\ldots,\pm i\mu_{\lambda,n},$
where $\mu_{\lambda,1}\geq \cdots \geq\mu_{\lambda,n} > 0.$ Define
$A_\lambda=(\sqrt 2~ v_1,\ldots,\sqrt 2~v_n,\sqrt 2~u_1,\ldots,\sqrt 2~u_n).$ Then
$A_\lambda$ is an orthogonal matrix and satisfies $V_\lambda A_\lambda=A_\lambda U_\lambda,$
where
\begin{align}\label{exp11}
U_\lambda=\left(
\begin{array}{cc}
0_n & -J_\lambda \\
J_\lambda  & 0_n
\end{array}
\right)
\end{align}
with $J_\lambda=\text{diag}(\mu_{\lambda,1},\ldots,\mu_{\lambda,n})$ and $0_n$ is zero matrix of order $n.$
Thus, in view of (\ref{exp11}), we have
\[\sum_{j=1}^m\lambda_j\langle x,U^{(j)}\xi\rangle=\langle x,V_\lambda\xi\rangle=\langle A_\lambda^tx, U_\lambda A_\lambda^t\xi\rangle,\]
where $A_\lambda A_\lambda^t=I.$ That is,
\begin{equation}\label{exp12}
\sum_{j=1}^m\lambda_j\text{Re}\,(z\cdot\overline{U^{(j)}w})
=\sum_{j=1}^n\mu_{\lambda,j} \text{ Im}\left((z_\lambda)_j\cdot(\bar{w}_\lambda)_j\right),
\end{equation}
where $z_\lambda$ and $w_\lambda$ are complexification of $A_\lambda^tx$ and $A_\lambda^t\xi$ respectively.

\smallskip

Let $f\in L^1(\mathbb{C}^n),$ then define
\begin{align}\label{exp41}
f_\lambda(z)=f(\tilde{z}_\lambda),
\end{align}
where $z,\tilde{z}_\lambda\in \mathbb C^n$ be the complexification of $x,A_\lambda x \in \mathbb R^{2n}$
respectively. The following lemma would give a simplification of the $\lambda$-twisted spherical mean on
the M\'{e}tivier group defined by (\ref{exp14}).

\begin{lemma}\label{lemma4}
Let $f\in L^1(\mathbb{C}^n)$ and $f_\lambda$ be as in (\ref{exp41}). Then
$f\times_\lambda\mu_s(\tilde{z}_\lambda)=f_\lambda \tilde{\times}_\lambda\mu_s(z),$ where
\begin{align}\label{exp15}
f_\lambda \tilde{ \times}_\lambda \mu_s(z)=\int_{|w|=s}~f_{\lambda}(z-w)
~e^{\frac{i}{2}\sum_{j=1}^n\mu_{\lambda,j} \text{ Im}\left(z_j\cdot\,\bar{w}_j\right)}~d\mu_s(w).
\end{align}
\end{lemma}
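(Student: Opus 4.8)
The plan is to reduce the claim to a single orthogonal change of variables on the integration sphere together with the phase identity (\ref{exp12}). Writing out the definition (\ref{exp14}) at the point $\tilde{z}_\lambda$, I have
\[
f\times_\lambda\mu_s(\tilde{z}_\lambda)=\int_{|w|=s}f(\tilde{z}_\lambda-w)\,
e^{\frac{i}{2}\sum_{j=1}^m\lambda_j\text{Re}\,(\tilde{z}_\lambda\cdot\overline{U^{(j)}w})}\,d\mu_s(w),
\]
and the whole proof hinges on substituting $w=\tilde{w}_\lambda$, i.e.\ replacing the real vector underlying $w$ by its image under $A_\lambda$. Because $A_\lambda$ is orthogonal, this substitution is the first thing to justify: it preserves $|w|=s$ (so the domain is unchanged) and leaves the normalized surface measure $\mu_s$ invariant, so $d\mu_s$ transforms to itself.

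Next I would rewrite the two remaining ingredients. For the argument of $f$, note that $\tilde{z}_\lambda-\tilde{w}_\lambda$ is the complexification of $A_\lambda x-A_\lambda\eta=A_\lambda(x-\eta)$, which is exactly the vector $\widetilde{(z-w)}_\lambda$ appearing in (\ref{exp41}); hence $f(\tilde{z}_\lambda-\tilde{w}_\lambda)=f_\lambda(z-w)$. For the phase, after the substitution the exponent reads $\sum_{j=1}^m\lambda_j\text{Re}\,(\tilde{z}_\lambda\cdot\overline{U^{(j)}\tilde{w}_\lambda})$, and I would apply (\ref{exp12}) directly to the transformed points: since $A_\lambda^t\tilde{z}_\lambda=x$ and $A_\lambda^t\tilde{w}_\lambda=\eta$, the right-hand side of (\ref{exp12}) collapses to $\sum_{j=1}^n\mu_{\lambda,j}\text{Im}\,(z_j\cdot\bar{w}_j)$. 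Substituting these two computations back and renaming the integration variable yields precisely the integral (\ref{exp15}), which is $f_\lambda\tilde{\times}_\lambda\mu_s(z)$.

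The hard part will be the bookkeeping between $A_\lambda$ and $A_\lambda^t$. The transformation in (\ref{exp41}) is built from $A_\lambda$, whereas the phase identity (\ref{exp12}) is phrased in terms of $A_\lambda^t$; the point to check carefully is that feeding the transformed points $\tilde{z}_\lambda,\tilde{w}_\lambda$ into (\ref{exp12}) produces the cancellation $A_\lambda^tA_\lambda=I$ and returns the plain variables $z,w$ rather than an unwanted transpose. If one prefers to avoid invoking (\ref{exp12}) at transformed points, the same conclusion follows by computing the real form of the phase directly: $\sum_j\lambda_j\langle A_\lambda x,U^{(j)}A_\lambda\eta\rangle=\langle A_\lambda x,V_\lambda A_\lambda\eta\rangle=\langle x,U_\lambda\eta\rangle$ via $V_\lambda A_\lambda=A_\lambda U_\lambda$, and then reading off $\langle x,U_\lambda\eta\rangle=\sum_{j=1}^n\mu_{\lambda,j}\text{Im}\,(z_j\bar{w}_j)$ from the block structure (\ref{exp11}).
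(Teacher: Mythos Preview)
Your proposal is correct and follows essentially the same route as the paper: an orthogonal change of variable on the sphere (the paper writes it in real coordinates as $\xi\mapsto A_\lambda^t\xi$, which is your $w=\tilde w_\lambda$) together with the phase reduction $\langle A_\lambda x,V_\lambda A_\lambda\eta\rangle=\langle x,U_\lambda\eta\rangle$ coming from $V_\lambda A_\lambda=A_\lambda U_\lambda$. The alternative real-form computation you give at the end is exactly what the paper records.
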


\begin{proof}
In view of (\ref{exp16}), we can write
\begin{align*}
f \times_\lambda \mu_s(A_\lambda x)
&=\int_{|\xi|=s}~f(A_\lambda x-\xi)~e^{\frac{i}{2}\langle A_\lambda x,A_\lambda U_\lambda A_\lambda^t\xi\rangle}~d\mu_s(\xi)\\
&=\int_{|\xi|=s}~f_\lambda(x-A_\lambda^t\xi)~e^{\frac{i}{2}\langle x, U_\lambda A_\lambda^t\xi\rangle}~d\mu_s(\xi)\\
&=\int_{|\xi|=s}~f_\lambda(x-\xi)~e^{\frac{i}{2}\langle x, U_\lambda\xi\rangle}~d\mu_s(\xi)\\
&=\int_{|w|=s}~f_{\lambda}(z-w)
~e^{\frac{i}{2}\sum_{j=1}^n\mu_{\lambda,j} \text{ Im}\left(z_j\cdot\,\bar{w}_j\right)}~d\mu_s(w)\\
&=f_\lambda \tilde{ \times}_\lambda \mu_s(z).
\end{align*}
\end{proof}

To deal with the modified $\lambda$-twisted spherical mean $f_\lambda \tilde{ \times}_\lambda \mu_s,$
defined in (\ref{exp15}), it is required to study the function $f_\lambda.$ In particular, we need to
find out those polynomials $P$ such that $P_\lambda\in H_{p,q}$ for some $p,q.$ Let $P_\lambda\in H_{p,q}.$
By identifying $\mathbb C^n$ with $\mathbb R^{2n},$ we get $P_\lambda\in H_l,$ where $l=p+q,$ and $H_l$
is the space of all homogeneous harmonic polynomials of degree $l$ on $\mathbb{R}^{2n}.$
Since $P(x)=P_\lambda(A_\lambda^tx)$ and the Laplacian is rotation invariant, we get $P\in H_l,$
and hence \[P\in \bigoplus\limits_{p'+q'=l} H_{p',q'}.\] With the above observation, we define
\begin{align}\label{exp42}
H^\lambda_{p,q}=\{P\in \bigoplus_{p'+q'=p+q} H_{p',q'}: P_\lambda\in H_{p,q}\}.
\end{align}
Next, we prove a similar result to support theorem for the M\'{e}tivier groups. Consider the following
left-invariant differential operators for the $\lambda$-twisted spherical mean (\ref{exp15}),
\[\tilde{Z}_j^\lambda=\frac{\partial}{\partial z_j}-\frac{\mu_{\lambda,j}}{4}{\bar{z}}_j~\mbox{and }
\tilde{Z}^{*^\lambda}_j=\frac{\partial}{\partial \bar{z}_j}+\frac{\mu_{\lambda,j}}{4}{z}_j,~ j=1,2,...,n.\]
Since $P_\lambda\in H_{p,q},$ define $P_\lambda^{p,q}(\tilde{Z})$ as in (\ref{exp40}), replacing $Z$
by $\tilde{Z}.$

\begin{theorem}
Let $f=\tilde{a}P,$ where $P\in H^\lambda_{p,q},$ be a smooth function on $\mathbb C^n$ and
$|z|^k e^{\frac{\mu_{\lambda,1}}{4}|z|^2}f(z)$ is bounded for each $k\in\mathbb{Z}_+.$
Then $\Pi_{0,0}\left(P_\lambda^{p,q}(\tilde{Z})f_\lambda\right)\tilde{\times}_\lambda\,\mu_s(z)=0$
for all  $z\in\mathbb C^n$ and $s>r+|z|$ if and only if $f$ is supported in $|z| \leq r.$
\end{theorem}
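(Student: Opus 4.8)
The plan is to push everything over to the modified mean $\tilde{\times}_\lambda$ of (\ref{exp15}) and then reduce the support statement to an ordinary differential equation for the radial profile $\tilde a$. First I would invoke Lemma \ref{lemma4}: since $A_\lambda$ is orthogonal it preserves $|\cdot|$, so $f$ is supported in $\{|z|\le r\}$ if and only if $f_\lambda$ is, while $f\times_\lambda\mu_s(\tilde z_\lambda)=f_\lambda\tilde{\times}_\lambda\mu_s(z)$. Writing $f_\lambda(z)=\tilde a(\rho)P_\lambda(z)$ with $P_\lambda\in H_{p,q}$ and setting $g:=\Pi_{0,0}\bigl(P_\lambda^{p,q}(\tilde{Z})f_\lambda\bigr)$, it then suffices to prove that ``$g\tilde{\times}_\lambda\mu_s(z)=0$ for all $z$ and all $s>r+|z|$'' is equivalent to ``$f_\lambda$ supported in $\{|z|\le r\}$''.

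The converse direction is routine and I would dispatch it first. If $f_\lambda$ vanishes for $|z|>r$, then, since each $\tilde{Z}_j^\lambda=\partial_{z_j}-\frac{\mu_{\lambda,j}}{4}\bar z_j$ is first order and hence support preserving, and since $\Pi_{0,0}$ merely averages over spheres, the radial function $g$ is again supported in $\{|z|\le r\}$. For $s>r+|z|$ every point $z-w$ with $|w|=s$ satisfies $|z-w|\ge s-|z|>r$, so the integrand in (\ref{exp15}) vanishes identically and $g\tilde{\times}_\lambda\mu_s(z)=0$.

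For the forward direction I expect to need the hypothesis only at $z=0$. Because $g$ is radial, the exponential weight in (\ref{exp15}) is trivial at the origin, so $g\tilde{\times}_\lambda\mu_s(0)$ equals the common value of $g$ on the sphere of radius $s$; the hypothesis then forces $g\equiv 0$ on $(r,\infty)$. Next I would rerun the computation from the proof of Theorem \ref{th2} with $\tilde{Z}_j^\lambda$ in place of $Z_j^\lambda$, i.e. with $\nu_j=-\mu_{\lambda,j}$ and with the off-diagonal $(l\neq j)$ terms now \emph{absent}, so that $D_j=\rho\partial_\rho-\frac{\mu_{\lambda,j}}{2}\rho^2$. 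This identifies $g$ with a constant multiple of $\bigl(\sum_{|\beta|+k=q}d_{\beta,k}\rho^{2k}\bar{D}^\beta\bigr)\bigl(\sum_{|\alpha|+l=p}c_{\alpha,l}\rho^{2l}D^\alpha\bigr)\tilde a$, whence $\tilde a$ solves this order-$(p+q)$ ODE on $(r,\infty)$.

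The heart of the argument, and the step I expect to be the main obstacle, is to extract the support conclusion from this ODE together with the growth hypothesis. Using the conjugation identities $D_j=e^{\frac{\mu_{\lambda,j}}{4}\rho^2}(\rho\partial_\rho)e^{-\frac{\mu_{\lambda,j}}{4}\rho^2}$ and $\bar{D}_j=e^{-\frac{\mu_{\lambda,j}}{4}\rho^2}(\rho\partial_\rho)e^{\frac{\mu_{\lambda,j}}{4}\rho^2}$, I would determine the asymptotics of the $(p+q)$-dimensional solution space on $(r,\infty)$ and show that every solution behaves like $e^{\pm\frac{\mu_{\lambda,j}}{4}\rho^2}\rho^{m}$ as $\rho\to\infty$, exactly as in the explicit pure-monomial formula of Theorem \ref{th2} (with $\nu_{l_1},\nu_{l_2}$ replaced by $-\mu_{\lambda,l_1},-\mu_{\lambda,l_2}$). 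This is precisely where the M\'etivier obstruction enters: because the eigenvalues $\mu_{\lambda,1}\ge\cdots\ge\mu_{\lambda,n}>0$ of $U_\lambda$ are in general distinct, the operators $D_j$ no longer commute (indeed $[D_j,D_{j'}]=(\mu_{\lambda,j}-\mu_{\lambda,j'})\rho^2$), no single Gaussian conjugation trivializes the product, and one must control the whole basis rather than a single exponential factor. The key point, however, is that the fastest decay attainable by any solution is $e^{-\frac{\mu_{\lambda,1}}{4}\rho^2}$ times a fixed power, since $\mu_{\lambda,1}$ is the top eigenvalue; hence for every nonzero solution $\rho^{N}e^{\frac{\mu_{\lambda,1}}{4}\rho^2}|\tilde a(\rho)|\to\infty$ for large $N$. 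The hypothesis that $|z|^k e^{\frac{\mu_{\lambda,1}}{4}|z|^2}f(z)$ is bounded for every $k$ therefore annihilates all the coefficients, forcing $\tilde a\equiv 0$ on $(r,\infty)$, so that $f=\tilde a P$ is supported in $\{|z|\le r\}$.
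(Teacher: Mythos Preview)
Your approach is essentially the paper's: pass to $f_\lambda$ via Lemma \ref{lemma4}, evaluate the radial function $g=\Pi_{0,0}\bigl(P_\lambda^{p,q}(\tilde Z)f_\lambda\bigr)$ at $z=0$ to obtain an ODE for $\tilde a$ on $(r,\infty)$, then kill all solutions with the growth hypothesis. Two remarks on execution.

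First, once you have noted that the off-diagonal terms of $\tilde Z_j^\lambda$ are absent, you should use this: the extra $\rho^{2l}$ and $\rho^{2k}$ summands in the Theorem \ref{th2} formula came precisely from those off-diagonal pieces, so here they drop out and the ODE is a \emph{pure product}
\[
\prod_{k=1}^q\Bigl\{\tfrac{\gamma_{p,q+1-k}}{2}\bigl(\rho\partial_\rho+\tfrac{d_k}{2}\rho^2\bigr)+1\Bigr\}
\prod_{i=1}^p\Bigl\{\tfrac{\gamma_{p+1-i,q}}{2}\bigl(\rho\partial_\rho-\tfrac{c_i}{2}\rho^2\bigr)+1\Bigr\}\,\tilde a=0,
\]
with $c_i,d_k\in\{\mu_{\lambda,1},\ldots,\mu_{\lambda,n}\}$. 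This is exactly what the paper writes down.

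Second, the paper does not argue via abstract asymptotics of a $(p+q)$-dimensional solution space; it simply solves the product ODE factor by factor (each first-order factor integrates by your conjugation identity) to obtain the explicit basis
\[
\tilde a(\rho)=\sum_{i=1}^p A_i\,e^{-\frac{c_i}{4}\rho^2}\rho^{-2(p+q+n-i)}+\sum_{k=1}^q B_k\,e^{\frac{d_k}{4}\rho^2}\rho^{-2(p+q+n-k)},\qquad \rho>r.
\]
With this in hand the growth hypothesis $|z|^k e^{\frac{\mu_{\lambda,1}}{4}|z|^2}f(z)$ bounded (and $\mu_{\lambda,1}\ge c_i,d_k>0$) forces every $A_i,B_k$ to vanish, which is your conclusion. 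Your non-commutation observation $[D_j,D_{j'}]=(\mu_{\lambda,j}-\mu_{\lambda,j'})\rho^2$ is correct but not actually an obstacle, since one integrates the factors from the inside out rather than trying to diagonalize them simultaneously. Your treatment of the converse direction is fine and is left implicit in the paper.
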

\begin{proof}
If $f=\tilde{a}P,$ then $f_\lambda=\tilde{a}P_\lambda.$ Clearly for $p=q=0$, $\tilde{a}=0.$
Let $p\geq 1,$ then applying $\tilde{Z}_j^\lambda$ to $f_\lambda,$ we have
\begin{align*}
\tilde{Z}_j^\lambda f_\lambda=\frac{1}{2}\left(\frac{1}{\rho}\dfrac{\partial}{\partial\rho}
-\frac{\mu_{\lambda,j}}{2}\right)\tilde{a}\bar{z}_jP_\lambda+\tilde{a}\dfrac{\partial P_\lambda}{\partial z_j}.
\end{align*}
Since $P_\lambda\in H_{p,q},$ substituting the value of $\bar{z}_jP_\lambda$ from corollary \ref{corol1},
we have
\begin{align*}
\tilde{Z}_j^\lambda f_\lambda=\frac{1}{2}\left(\frac{1}{\rho}\dfrac{\partial}{\partial\rho}
-\frac{\mu_{\lambda,j}}{2}\right)\tilde{a}P_{\lambda,0}
+\left[\frac{\gamma_{p,q}}{2}\left(\rho\dfrac{\partial}{\partial\rho}
-\rho^2\frac{\mu_{\lambda,j}}{2}\right)+1\right]\tilde{a}\dfrac{\partial P_\lambda}{\partial z_j},
\end{align*}
where $\gamma_{p,q}=\frac{1}{(n+p+q-1)}.$

Consider $q=0$ and $p=1.$ Then there exists a $j_o$ such that
$\dfrac{\partial P_\lambda}{\partial z_{j_o}}\neq 0.$ Thus, from
the given condition that
$\Pi_{0,0}(\tilde{Z}_{j_o}^\lambda f_\lambda)\tilde{\times}_\lambda\,\mu_s(0)=0$ for all $s>r,$
we arrived at
\begin{align*}
\left[\frac{1}{2n}\left(\rho\dfrac{\partial}{\partial\rho}
-\rho^2\frac{\mu_{\lambda,j_o}}{2}\right)+1\right]\tilde{a}=0,
\end{align*}
whenever $\rho>r.$ This leads to a solution
\begin{align*}
\tilde{a}(\rho)= A_1e^{-\frac{\mu_{\lambda,j_o}}{4}\rho^2}\rho^{-2n}.
\end{align*}
By an induction argument, for $q=0$ and $p\geq 1,$ from
$\Pi_{0,0}\left(P_\lambda^{p,0}(\tilde{Z})f_\lambda\right)\tilde{\times}_\lambda\,\mu_s(0)=0$
for all $s>r,$ it follows that
\begin{align*}
\prod_{i=1}^p\left\{\frac{1}{2(n+p-i)}\left(\rho\dfrac{\partial}{\partial\rho}
-\rho^2\frac{\mu_{\lambda,j_i}}{2}\right)+1\right\}\tilde{a}=0.
\end{align*}
Solving the above equation we get
\[\tilde{a}_{p,0}(\rho)=\sum_{i=1}^p A_ie^{-\frac{c_i}{4}\rho^2}\rho^{-2(p+n-i)},\]
where $c_i\in\{\mu_{\lambda,j}:1\leq j \leq n\}.$ Similar conclusion holds true for $p=0,q\geq 1.$

In general, for $p,q\geq 1,$ $\tilde{a}$ satisfies the ODE
\begin{align*}
\prod_{k=1}^q\left\{\frac{\gamma_{p,q+1-k}}{2}\left(\rho\dfrac{\partial}{\partial\rho}
+\rho^2\frac{d_k}{2}\right)+1\right\}
\prod_{i=1}^p\left\{\frac{\gamma_{p+1-i,q}}{2}\left(\rho\dfrac{\partial}{\partial\rho}
-\rho^2\frac{c_i}{2}\right)+1\right\}\tilde{a}=0,
\end{align*}
and be expressed as
\begin{equation}\label{exp24}
\tilde{a}_{p,q}(\rho)=\sum_{i=1}^p A_ie^{-\frac{c_i}{4}\rho^2}\rho^{-2(p+q+n-i)}
+\sum_{k=1}^q B_ke^{\frac{d_k}{4}\rho^2}\rho^{-2(p+q+n-k)}
\end{equation}
for all $\rho>r,$ where $c_i,d_k\in\{\mu_{\lambda,j}:1\leq j \leq n\}$ and $A_i,B_k$ are constants.
Since $\mu_{\lambda,1}\geq \mu_{\lambda,j}>0$ for all $j,$ by the given growth conditions, we infer
that $f_\lambda(z)=0$ for all $|z|>r.$ Thus, we conclude that $f$ is supported in $|z|\leq r.$
\end{proof}

A set $K\subset\mathbb C^n~(n\geq 2),$ which is closed under complex scaling,
is known as a complex cone. Further, a complex cone that does not intersect
the zero set of any bi-graded homogeneous harmonic polynomial is called
{\em non-harmonic}. The zero set of the polynomial $H(z)=az_1\bar z_2+|z|^2,$
where $a\not=0$ and $z\in\mathbb C^n$ is a non-harmonic complex cone, (see \cite{Sri4}).

\smallskip

 Let $z\in \mathbb C^n$ be the complexification of $x\in \mathbb R^{2n},$
and $\tilde{z}_\lambda$ be the complexification of $A_\lambda x.$ For a complex cone $K,$ define
$K_\lambda=\{z\in\mathbb C^n:\tilde{z}_\lambda\in K\}.$ Then $K_\lambda$ is also a complex cone,
and $K$ is non-harmonic if and only if $K_\lambda$ is non-harmonic.

\begin{theorem}
Suppose $K$ is a non-harmonic complex cone such that $K_\lambda$ aligned with one of
the coordinate axes in $\mathbb{C}^n~(n\geq 2).$ Let $f$ be a continuous function
on $\mathbb{C}^n$ such that $f\times_\lambda\,\mu_r(z)=0,$ for all $r>0$ and $z\in K.$
Then $f=0.$
\end{theorem}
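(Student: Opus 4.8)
The plan is to transport the hypothesis to the model mean $\tilde\times_\lambda$ by Lemma \ref{lemma4} and then run the spherical-harmonic argument underlying the known complex-cone injectivity results. Writing an arbitrary point of $K$ as $\tilde z_\lambda$ with $z\in K_\lambda$, the identity $f\times_\lambda\mu_r(\tilde z_\lambda)=f_\lambda\,\tilde\times_\lambda\mu_r(z)$ turns the hypothesis $f\times_\lambda\mu_r\equiv 0$ on $K$ into $f_\lambda\,\tilde\times_\lambda\mu_r(z)=0$ for all $r>0$ and all $z\in K_\lambda$. Since $K$ is non-harmonic if and only if $K_\lambda$ is non-harmonic, and since $f_\lambda=0$ forces $f=0$, the theorem reduces to the model statement: a continuous $g$ on $\mathbb C^n$ whose modified mean $g\,\tilde\times_\lambda\mu_r$ vanishes on a non-harmonic cone $K_\lambda$ aligned with a coordinate axis, for every $r>0$, must vanish identically.

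For the model statement I would argue through the bigraded angular structure. Expanding $g=\sum_{p,q}\sum_j a_j^{p,q}(\rho)\,Y_j^{p,q}$ and passing to the Laguerre functions $\varphi_k$ that diagonalise the twisted convolution, the family $\{g\,\tilde\times_\lambda\mu_r=0:r>0\}$ is equivalent to $\{g\,\tilde\times_\lambda\varphi_k=0:k\in\mathbb Z_+\}$, and each $h_k:=g\,\tilde\times_\lambda\varphi_k$ is an eigenfunction of the associated special-Hermite-type operator, hence real-analytic. Using that $K_\lambda$ is a cone, so that under complex scaling $z\mapsto\zeta z$ a bidegree-$(p,q)$ piece carries the homogeneity weight $\zeta^p\bar\zeta^q$, one isolates each bidegree and is reduced to the assertion that a bigraded harmonic combination $\sum_j c_j Y_j^{p,q}$ vanishes on $K_\lambda$; since $K_\lambda$ is non-harmonic, this combination is identically zero and all its coefficients vanish. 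Letting $k$ and $(p,q)$ range then yields $g\equiv 0$, hence $f\equiv 0$.

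The main obstacle is the genuinely multi-parameter nature of $\tilde\times_\lambda$ on a general M\'etivier group. For an $H$-type group the eigenvalues $\mu_{\lambda,j}$ all coincide, $\tilde\times_\lambda$ is a $|\lambda|$-twisted mean of Heisenberg type, and the diagonalisation and cone argument apply essentially as in the Heisenberg setting \cite{Sri4}. For a general M\'etivier group the $\mu_{\lambda,j}$ are distinct, so the kernel $e^{\frac i2\sum_j\mu_{\lambda,j}\operatorname{Im}(z_j\bar w_j)}$ lacks the single-parameter Laguerre structure; this is precisely where the hypothesis that $K_\lambda$ be aligned with a coordinate axis is used, allowing the eigenvalue-weighted phase to be decoupled and the analysis to be reduced to a one-parameter twisted mean to which the special-Hermite machinery applies. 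Reconciling this decoupling with the bigraded expansion, and checking that the resulting harmonic combination genuinely detects the non-harmonicity of $K_\lambda$, is the delicate step on which the proof turns.
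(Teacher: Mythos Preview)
Your reduction via Lemma \ref{lemma4} to the model mean $\tilde\times_\lambda$ on $K_\lambda$ is correct and matches the paper's first step. The Laguerre/special-Hermite route you then propose, however, has a genuine gap in the M\'etivier setting that your third paragraph acknowledges but does not close. With distinct $\mu_{\lambda,j}$ there is no single Laguerre family diagonalising $\tilde\times_\lambda$, so the equivalence between ``$g\,\tilde\times_\lambda\mu_r=0$ for all $r$'' and ``$g\,\tilde\times_\lambda\varphi_k=0$ for all $k$'' is unavailable. Your claim that axis-alignment ``decouples'' the phase does not save this: on $z=(z_1,0,\dots,0)$ the phase indeed becomes $\mu_{\lambda,1}\,\text{Im}(z_1\bar w_1)$, but $f_\lambda(z-w)$ still depends on all coordinates of $w$, so you are not reduced to a one-variable twisted mean and the special-Hermite machinery still does not apply.

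The paper avoids spectral theory altogether and uses the axis-alignment in a different, elementary way. After integrating $f_\lambda\,\tilde\times_\lambda\mu_s(z)$ over $0\le s\le r$ to get a ball integral, it applies $2\partial_{z_1}$ (which, at points of the axis, picks up only $\mu_{\lambda,1}$ from the phase) and converts back via Green's theorem. This produces the ODE $F'(r)=\bigl(\tfrac{\mu_{\lambda,1}r}{2}+\tfrac{1}{r}\bigr)F(r)$ for $F(r)=r^{2n-1}\,g\,\tilde\times_\lambda\mu_r(z)$ with $g=\bar z_1 f_\lambda$, whose only solution compatible with the behaviour as $r\to 0$ is $F\equiv 0$. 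Thus $(\bar z_1 f_\lambda)\,\tilde\times_\lambda\mu_r$ also vanishes on $K_\lambda$; iterating gives $(P(z_1,\bar z_1)f_\lambda)\,\tilde\times_\lambda\mu_r=0$ on $K_\lambda$ for every polynomial $P$, and the paper then invokes (\cite{Sri4}, Lemma 2.6) to conclude $f_\lambda=0$. So the axis hypothesis is used not to restore a Laguerre structure, but to make the differentiation-plus-Green step involve only the single eigenvalue $\mu_{\lambda,1}$.
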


\begin{proof}
In view of Lemma \ref{lemma4},  given $f\times_\lambda\mu_r=0$ on $K$ implies
$f_\lambda\tilde{\times}_\lambda\,\mu_r=0$ on $K_\lambda.$
By hypothesis, without loss of generality, we can assume  $z=(z_1,0,...,0)\in K_\lambda$
for all $z_1\in\mathbb{C}.$ Thus,
\[\int_{|w|\leq r}f_\lambda(z+w)e^{-\frac{i}{2}\sum_{j=1}^n\mu_{\lambda,j} \text{ Im}\left(z_j\cdot\,\bar{w}_j\right)}dw
=\int_0^rf_\lambda\tilde{\times}_\lambda~\mu_s(z)s^{2n-1}ds= 0\]
for all $r>0$ and $z\in K_\lambda.$ Applying $2\partial_{z_1}$ to the above equation, we get
\begin{align*}
\int_{|w|\leq r}\frac{\partial}{\partial w_1}\Big(f_\lambda(z+w)&e^{-\frac{i}{2}\sum_{j=1}^n\mu_{\lambda,j}
\text{ Im}\left(z_j\cdot\,\bar{w}_j\right)}\Big)dw\\
&-\frac{\mu_{\lambda,1}}{2}\int_{|w|\leq r}\bar{w}_1 f_\lambda(z+w)e^{-\frac{i}{2}\sum_{j=1}^n\mu_{\lambda,j}
\text{ Im}\left(z_j\cdot\,\bar{w}_j\right)}ds=0.
\end{align*}
It follows by an application of Green's theorem that
\begin{align*}
\int_{|w|=r}\frac{\bar{w}_1}{r}\Big(f_\lambda(z+w)&e^{-\frac{i}{2}\sum_{j=1}^n\mu_{\lambda,j}
\text{ Im}\left(z_j\cdot\,\bar{w}_j\right)}\Big)dw\\
&=\frac{\mu_{\lambda,1}}{2}\int_{|w|\leq r}\bar{w}_1 f_\lambda(z+w)e^{-\frac{i}{2}\sum_{j=1}^n\mu_{\lambda,j}
\text{ Im}\left(z_j\cdot\,\bar{w}_j\right)}dw.
\end{align*}
Let $F(t)=t^{2n-1}g~\tilde{\times}_\lambda~\mu_t(z),$ where $g(z)=\bar{z}_1f_\lambda(z).$
Then we have
\begin{equation}\label{exp29}
\frac{F(r)}{r}=\frac{\mu_{\lambda,1} }{2}\int_0^rF(s)ds.
\end{equation}

It is easy to see that (\ref{exp29}) satisfies the ODE
\[F^{'}(r)=\left(\frac{\mu_{\lambda,1} r}{2}+\frac{1}{r}\right)F(r)\]
having the general solution
\[F(r)=\frac{c(z)}{r}e^\frac{\mu_{\lambda,1} r^2}{4}.\]
That is,
\[r^{2n-2}g~\tilde{\times}_\lambda~\mu_r(z)=c(z)e^\frac{\mu_{\lambda,1} r^2}{4}.\]
Letting $r\rightarrow 0,$ we get $c(z)=0$. Hence $\bar{z}_1f_\lambda~\tilde{\times}_\lambda~\mu_r(z)=0$
for all $r>0$ and $z\in K_\lambda.$ By replicating the above procedure, we get $(Pf_\lambda)\tilde{\times}_\lambda~\mu_r(z)=0$
for arbitrary polynomial $P(z_1,\bar z_1).$ By a similar argument as in
(\cite{Sri4}, Lemma 2.6) we conclude that $f_\lambda=0$ and hence $f=0.$
\end{proof}

\begin{remark}
If we consider $H$-type group instead of the general M\'{e}tivier group, then the
restriction on the cone to align with one of the coordinates axes could be relaxed.
\end{remark}

\section{Some results on $H$-type groups}\label{section4}
In this section, we see that the $\lambda$-twisted spherical mean on the $H$-type group
can be related to the $|\lambda|$-twisted spherical mean on the Heisenberg group. Although
the $\lambda$-twisted spherical mean on $H$-type group is not $U(n)$-invariant,
we can prove sufficient condition for a function to be in $Z_{r,\infty},$ and
an analogue of Helgason's support theorem together with Heche-Bochner identity for the $H$-type
group. Further, we prove that the boundary of a bounded domain is a set of injectivity
for $\lambda$-twisted spherical mean on the $H$-type group.

\smallskip

We know that for the $H$-type groups, $\mu_{\lambda,j}=|\lambda|$ for all $j,$ due to the fact that
$\sum_{j=1}^m\lambda_jU^{(j)}=|\lambda|V.$
Thus (\ref{exp12}) becomes
\begin{align*}
\sum_{j=1}^m\lambda_j\text{Re}\,(z\cdot\overline{U^{(j)}w})
=|\lambda|\text{ Im}\left(z_\lambda\cdot\bar{w}_\lambda\right)
\end{align*}
and from Lemma \ref{lemma4}, we have
\begin{align}\label{exp13}
f\times_\lambda\mu_s(z)=f_\lambda\tilde{\times}_\lambda~\mu_s(z_\lambda)=f_\lambda \times_{|\lambda|} \mu_s(z_\lambda),
\end{align}
where $f_\lambda \times_{|\lambda|} \mu_s$ is the $|\lambda|$-twisted spherical mean on
the Heisenberg group. Similarly, the $\lambda$-twisted convolution on the $H$-type group
can be related to the twisted convolution on the Heisenberg group by
\begin{align}\label{exp27}
f\times_\lambda g(z)=f_\lambda \times_{|\lambda|} g_\lambda(z_\lambda).
\end{align}

Next, we present the sufficient condition for functions to be in $Z_{r,\infty},$ which
we mentioned in Remark \ref{rk1}.
\begin{theorem}\label{th6}
Let $P\in H^\lambda_{p,q}$ and $h(z)=\dfrac{{e^{\frac{|\lambda|}{4}|z|^2}}P(z)}{|z|^{2(n+p+q-i)}},$
where $1\leq i\leq p$ and $H^\lambda_{p,q}$ is defined in (\ref{exp42}). Then $h\in Z_{r,\infty}.$
\end{theorem}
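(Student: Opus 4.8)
The plan is to exploit the feature of $H$-type groups recorded in (\ref{exp13}): since every eigenvalue $\mu_{\lambda,j}$ of $U_\lambda$ equals $|\lambda|$, the $\lambda$-twisted spherical mean on $G$ becomes, after the orthogonal substitution $A_\lambda$, the genuine $|\lambda|$-twisted spherical mean on the Heisenberg group. By Lemma \ref{lemma4} and (\ref{exp13}) it therefore suffices to analyse $h_\lambda$. First I would compute $h_\lambda$ explicitly. Because $A_\lambda$ is orthogonal we have $|\tilde z_\lambda|=|z|$, and by (\ref{exp41}) together with the hypothesis $P\in H^\lambda_{p,q}$ (so that $P_\lambda\in H_{p,q}$ by (\ref{exp42})),
\[
h_\lambda(z)=\frac{e^{\frac{|\lambda|}{4}|z|^2}\,P_\lambda(z)}{|z|^{2(n+p+q-i)}},\qquad P_\lambda\in H_{p,q}.
\]
Thus $h_\lambda$ is precisely a function of the shape for which the Heisenberg analogue of $Z_{r,\infty}$ was analysed in \cite{RS}.

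Next I would transport the geometric hypotheses. Since $A_\lambda$ is an isometry of $\mathbb R^{2n}\cong\mathbb C^n$ it carries spheres to spheres and preserves the containments $S_s(z)\subset B_{r,\infty}$ and $B_r(0)\subseteq B_s(z)$; hence $h\in Z_{r,\infty}$ is equivalent to the vanishing of $h_\lambda\times_{|\lambda|}\mu_s$ over every sphere surrounding $B_r(0)$. The statement is thereby reduced to the purely Heisenberg assertion: for $Q\in H_{p,q}$ and $1\le i\le p$, the function $g(z)=e^{\frac{|\lambda|}{4}|z|^2}|z|^{-2(n+p+q-i)}Q(z)$ satisfies $g\times_{|\lambda|}\mu_s(z)=0$ whenever $B_r(0)\subseteq B_s(z)$.

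To see that the radial profile is the correct one, I would run the ODE computation of Section \ref{section3} in reverse. The operators $\tilde Z_j^\lambda=\partial_{z_j}-\tfrac{|\lambda|}{4}\bar z_j$ commute with the $|\lambda|$-twisted mean, and for $Q\in H_{p,q}$ the same manipulation that produced (\ref{exp24}) shows that the radial coefficient of $g$ must be annihilated by the product operator $\prod_{i=1}^{p}\bigl\{\tfrac{\gamma_{p+1-i,q}}{2}\bigl(\rho\partial_\rho-\tfrac{|\lambda|}{2}\rho^2\bigr)+1\bigr\}$. Inserting the ansatz $\tilde a(\rho)=e^{a\rho^2}\rho^{b}$ into a single factor forces $a=\tfrac{|\lambda|}{4}$ and $b=-2(n+p+q-i)$, so $\tilde a(\rho)=e^{\frac{|\lambda|}{4}\rho^2}\rho^{-2(n+p+q-i)}$ lies in its kernel for each $1\le i\le p$; this is exactly the radial part of $g$ and confirms the match with the $p$-family of basic solutions in (\ref{exp24}). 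It then remains to upgrade this to the actual vanishing of $g\times_{|\lambda|}\mu_s$, which I would obtain from the sufficiency half of the characterisation of the Heisenberg analogue of $Z_{r,\infty}$ in \cite{RS} (equivalently, from the explicit twisted-spherical-mean and Laguerre-function evaluation underlying it), to which $g$ conforms verbatim.

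The hard part is precisely this last step. Verifying the identity at $z=0$ is routine, as noted in Remark \ref{rk1}, and membership of $\tilde a$ in the kernel of the ODE operator only reproduces the necessity-direction bookkeeping; the genuine content is the vanishing of $g\times_{|\lambda|}\mu_s(z)$ for all $z$ and all surrounding spheres, which does not follow from the projected identities used in Section \ref{section3} and requires the full twisted-spherical-mean computation on the Heisenberg group. Everything else --- the transfer (\ref{exp13}), the explicit form of $h_\lambda$, and the $A_\lambda$-invariance of the geometry --- is routine.
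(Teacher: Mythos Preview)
Your proposal is correct and follows the same strategy as the paper: transfer via (\ref{exp13}) to the Heisenberg $|\lambda|$-twisted mean, observe that $h_\lambda$ has exactly the form treated in \cite{RS}, and cite that reference for the sufficiency. The paper carries out one additional concrete step you omit---it expands $e^{\frac{|\lambda|}{4}|z+w|^2}e^{-\frac{i}{2}|\lambda|\operatorname{Im}(z\cdot\bar w)}$ to strip the problem down to the purely Euclidean integrals $\int_{|w|=s}\frac{w^\alpha P_\lambda(z+w)}{|z+w|^{2(\eta-i)}}\,d\mu_s(w)$ before invoking \cite{RS}, Theorem~3.3---whereas your ODE paragraph is an unnecessary detour (you yourself flag that it only recovers the necessity bookkeeping); but the core reduction and the final appeal to \cite{RS} are the same.
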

\begin{proof}
To prove the result, it needs to verify that $h\times_\lambda\mu_s (z)=0$ for all $z\in\mathbb C^n$
and $s>|z|+r.$ From (\ref{exp13}), it is enough to show that $h_\lambda\times_{|\lambda|} \mu_s(z)=0$
for all $z\in\mathbb C^n$ and $s>|z|+r.$

Let $\eta=n+p+q$ and consider
\begin{align*}
h_\lambda \times_{|\lambda|} \mu_s(z)=\int_{|w|=s}
\frac{e^{\frac{|\lambda|}{4}|z+w|^2}P_\lambda(z+w)}{|z+w|^{2(\eta-i)}}
e^{-\frac{i}{2}|\lambda|\text{ Im}\left(z\cdot\bar{w}\right)}d\mu_s(w).
\end{align*}
Simplifying the exponential terms, it is enough to show the following integral is zero
\[\int_{|w|=s}
\frac{e^{\frac{|\lambda|}{2}\bar{z}\cdot w}P_\lambda(z+w)}{|z+w|^{2(\eta-i)}}d\mu_s(w).\]
Again if we expand exponential term the above integral will reduce to
\[\int_{|w|=s}\frac{w^\alpha P_\lambda(z+w)}{|z+w|^{2(\eta-i)}}d\mu_s(w).\]
Hence we arrived at the same Euclidean situation, which was proved by the author in (\cite{RS}, Theorem 3.3).
Thus, it follows that $h_\lambda \times_{|\lambda|} \mu_s(z)=0$ for all $z\in\mathbb C^n$ and $s>|z|+r.$
\end{proof}

Next, we shall prove support theorem for the $\lambda$- twisted spherical mean
on the H-type group, for which we need to recall the following support theorem
for the TSM on the Heisenberg group.
\begin{theorem}\label{th11}\emph{\cite{RS}}
Let $g$ be a continuous function on $\mathbb C^n$ such that for each
$k\in\mathbb{Z}_+,$ $|z|^k e^{\frac{|\lambda|}{4}|z|^2}g(z)$ is bounded
for every $k\in\mathbb{Z}_+.$ Then $g$ is supported in $|z| \leq r$ if and only if $g\times_{|\lambda|}\mu_s(z)=0 $
for all $z\in\mathbb C^n$ and $s>r+|z|.$
\end{theorem}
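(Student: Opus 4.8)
The plan is to handle the trivial implication by a triangle-inequality argument and then reduce the support theorem, via the $U(n)$-symmetry of the Heisenberg twisted mean, to the single-bidegree result already proved above. First I would dispose of the ``only if'' direction: if $g$ is supported in $\overline{B_r(0)}$ and $s>r+|z|$, then every $w$ with $|w|=s$ satisfies $|z-w|\ge s-|z|>r$, so $g(z-w)=0$ on the entire sphere of integration and hence $g\times_{|\lambda|}\mu_s(z)=0$. The content is the converse.

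For the converse I would start from the bi-graded expansion $g(\rho\omega)=\sum_{p,q}\sum_{j=1}^{d_{p,q}}a_j^{p,q}(\rho)Y_j^{p,q}(\omega)$ and exploit that on the Heisenberg group the phase $\mathrm{Im}(z\cdot\bar{w})$ is $U(n)$-invariant, so $(g\circ\sigma)\times_{|\lambda|}\mu_s=(g\times_{|\lambda|}\mu_s)\circ\sigma$ for every $\sigma\in U(n)$. Thus $\times_{|\lambda|}\mu_s$ is a $U(n)$-equivariant (twisted) convolution operator; by Schur's lemma it preserves each $U(n)$-isotypic block and, within a block, acts by one and the same radial transform on every coefficient, so it does not mix the $Y_j^{p,q}$. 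Using the orthogonality of distinct $H_{p,q}$ and the linear independence of the $Y_j^{p,q}$ on the sphere, the hypothesis $g\times_{|\lambda|}\mu_s\equiv0$ then forces $g_{p,q,j}\times_{|\lambda|}\mu_s(z)=0$ for all $z$ and $s>r+|z|$, where $g_{p,q,j}=a_j^{p,q}(\rho)Y_j^{p,q}(\omega)=\tilde{a}_j^{p,q}(\rho)P_j^{p,q}(z)$ and $P_j^{p,q}\in H_{p,q}$. Since $a_j^{p,q}(\rho)$ is obtained by integrating $g(\rho\,\cdot\,)$ against $\overline{Y_j^{p,q}}$ over $S^{2n-1}$, the bound on $|z|^k e^{\frac{|\lambda|}{4}|z|^2}g(z)$ descends to each coefficient, so every $g_{p,q,j}$ inherits the growth hypothesis.

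Each $g_{p,q,j}$ is now exactly of the form $\tilde{a}P$ with $P\in H_{p,q}=H^\lambda_{p,q}$: on the Heisenberg group all eigenvalues satisfy $\mu_{\lambda,j}=|\lambda|$, the rotation $A_\lambda$ is immaterial (so $f_\lambda=f$ and $\tilde{\times}_\lambda=\times_{|\lambda|}$), and I would invoke the support theorem for $H^\lambda_{p,q}$ established above. Commuting the left-invariant operators $\tilde{Z}_j^{\lambda}=\partial_{z_j}-\frac{|\lambda|}{4}\bar{z}_j$ and $\tilde{Z}_j^{\ast\lambda}=\partial_{\bar{z}_j}+\frac{|\lambda|}{4}z_j$ through $\times_{|\lambda|}\mu_s$ and projecting with $\Pi_{0,0}$ reduces the bidegree $(p,q)$ to $(0,0)$ by repeated use of Corollary \ref{corol1}; evaluating the resulting identity at $z=0$ produces the product ODE whose solution on $(r,\infty)$ is $\tilde{a}(\rho)=\sum_{i=1}^p A_i e^{-\frac{|\lambda|}{4}\rho^2}\rho^{-2(p+q+n-i)}+\sum_{k=1}^q B_k e^{\frac{|\lambda|}{4}\rho^2}\rho^{-2(p+q+n-k)}$. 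The full-strength growth hypothesis then does the killing: multiplying a surviving term by $\rho^N e^{\frac{|\lambda|}{4}\rho^2}$ gives, for the $B_k$-terms, an $e^{\frac{|\lambda|}{2}\rho^2}$ blow-up, and for the $A_i$-terms a factor $\rho^{N-(p+q)-2n+2i}$ that is unbounded once $N$ is large; boundedness for \emph{every} $N$ forces all $A_i=B_k=0$, so $\tilde{a}\equiv0$ on $(r,\infty)$. Summing over $(p,q,j)$ and using continuity yields that $g$ is supported in $\overline{B_r(0)}$.

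I expect the main obstacle to be the bookkeeping in reducing a general bidegree $(p,q)$ to $(0,0)$: choosing the correct composite operator, tracking the scalar coefficients generated by the repeated application of Corollary \ref{corol1}, and checking that the product ODE integrates precisely to the stated two-parameter family. The $U(n)$-equivariance step, though conceptually clean, also needs care, since one must justify that $\times_{|\lambda|}\mu_s$ genuinely decouples the coefficients $a_j^{p,q}$ rather than mixing them within a single isotypic block.
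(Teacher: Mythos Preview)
The paper does not prove Theorem~\ref{th11}; it is recalled from \cite{RS} without proof and then applied, via the identity (\ref{exp13}), to obtain the $H$-type support theorem that follows. So there is no argument in the present paper to compare your proposal against.

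That said, your outline is essentially the proof given in \cite{RS} and is correct: $U(n)$-equivariance of $\times_{|\lambda|}\mu_s$ decouples the bi-graded blocks (indeed, by Schur the operator acts as a single radial transform on each $H_{p,q}$-isotype, so it does not mix the $Y_j^{p,q}$), the growth hypothesis passes to each coefficient by integrating against $\overline{Y_j^{p,q}}$, and the left-invariant operators together with Corollary~\ref{corol1} reduce a single component $\tilde a P$ to the product ODE whose solutions you write down; the decay then forces all constants to vanish. One remark on presentation: you phrase the single-bidegree step as invoking ``the support theorem for $H^\lambda_{p,q}$ established above,'' but that M\'etivier-group result in Section~\ref{section3} is modeled on, not a precursor to, the Heisenberg theorem you are proving, and its hypothesis is the projected condition $\Pi_{0,0}\bigl(P_\lambda^{p,q}(\tilde Z)f_\lambda\bigr)\tilde{\times}_\lambda\mu_s=0$ rather than $f\times_{|\lambda|}\mu_s=0$. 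Since you already have the full vanishing $g_{p,q,j}\times_{|\lambda|}\mu_s=0$, and since on the Heisenberg group each $\Pi_{p,q}$ commutes with $\times_{|\lambda|}\mu_s$ by the very $U(n)$-equivariance you use, it is cleaner to run the ODE reduction directly (apply $\tilde Z^\alpha\tilde Z^{*\beta}$, project to $(0,0)$, evaluate at $0$) without routing through the M\'etivier statement. Your final paragraph already does exactly this, so the appeal to the earlier theorem is redundant rather than incorrect.
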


Using (\ref{exp13}) we prove the following support theorem for the
$H$-type groups.

\begin{theorem}
Suppose $f$ is a continuous function on $\mathbb C^n$ such that  each of the function
$|z|^ke^{\frac{|\lambda|}{4}|z|^2}f(z)$  is bounded. Then $f$ is supported in $|z|\leq r$
if and only if $f \times_\lambda \mu_s(z) = 0 $ for all $z\in\mathbb C^n$ and $s>r+|z|.$
\end{theorem}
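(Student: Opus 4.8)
The plan is to reduce the statement to the corresponding support theorem on the Heisenberg group, Theorem \ref{th11}, by means of the orthogonal change of variables encoded in the map $f\mapsto f_\lambda$ of (\ref{exp41}) together with the identity (\ref{exp13}). Writing $g=f_\lambda$, the key relation $f\times_\lambda\mu_s(z)=g\times_{|\lambda|}\mu_s(z_\lambda)$ lets me translate every hypothesis and conclusion about the $\lambda$-twisted mean on the $H$-type group into the corresponding statement about the $|\lambda|$-twisted (Heisenberg) mean of $g$, at which point Theorem \ref{th11} finishes the argument. The entire proof is therefore a transport-of-structure along an orthogonal map, with no genuine analytic content beyond what is already contained in the Heisenberg case.

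First I would check that the three relevant conditions — the growth bound, the support condition, and the vanishing of the spherical means — are each preserved under $f\mapsto g=f_\lambda$. Since $A_\lambda$ is orthogonal, the complexification map $z\mapsto\tilde z_\lambda$ is norm-preserving, so $|g(z)|=|f(\tilde z_\lambda)|$ with $|\tilde z_\lambda|=|z|$; consequently $|z|^ke^{\frac{|\lambda|}{4}|z|^2}g(z)$ is bounded precisely when $|z|^ke^{\frac{|\lambda|}{4}|z|^2}f(z)$ is, and $f$ is supported in $|z|\le r$ if and only if $g$ is. The same norm-preservation carries the support condition in both directions.

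Next I would transfer the vanishing hypothesis. By (\ref{exp13}), $f\times_\lambda\mu_s(z)=0$ is equivalent to $g\times_{|\lambda|}\mu_s(z_\lambda)=0$. As $z$ runs over $\mathbb C^n$, the point $z_\lambda$ (the complexification of $A_\lambda^tx$) runs bijectively over $\mathbb C^n$, and crucially $|z_\lambda|=|z|$. Hence the family of conditions ``$f\times_\lambda\mu_s(z)=0$ for all $z$ and all $s>r+|z|$'' is exactly the family ``$g\times_{|\lambda|}\mu_s(w)=0$ for all $w$ and all $s>r+|w|$.'' With the growth bound already verified for $g$, Theorem \ref{th11} yields that $g$ is supported in $|z|\le r$, and transporting back through the orthogonal map gives the support of $f$; the converse direction follows by reading the same equivalences in reverse.

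The point requiring care — rather than a genuine obstacle — is the exact preservation of the sphere-parameter range $s>r+|z|$ under the change of variables. This is what makes the reduction faithful, and it rests entirely on the orthogonality of $A_\lambda$, which guarantees $|z_\lambda|=|z|$ so that the ball/annulus geometry underlying the hypotheses is carried over without distortion.
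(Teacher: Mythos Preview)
Your proposal is correct and is essentially the same argument as the paper's: both reduce to the Heisenberg support theorem (Theorem \ref{th11}) via the orthogonal change of variables $f\mapsto f_\lambda$ and the identity (\ref{exp13}), using $|z_\lambda|=|z|$ to transfer the growth, support, and vanishing conditions. Your write-up is in fact more explicit than the paper's terse version, but the route is identical.
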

\begin{proof}
We know that $f_\lambda(z_\lambda)=f(z),$ where $z$ and $z_\lambda$ are the complexification of
$x$ and $A_\lambda^tx$ respectively. Since $|z|^k e^{\frac{|\lambda|}{4}|z|^2}f(z),$
is bounded, it follows that  $|z|^k e^{\frac{|\lambda|}{4}|z|^2}f_\lambda(z)$ is bounded because $|z|=|z_\lambda|.$
Now, $f$ is supported in $|z| \leq r$ if and only if $f_\lambda$ is supported in $|z| \leq r.$
Hence, from (\ref{exp13}) and Theorem \ref{th11}, we get the desired result.
\end{proof}

Now, we state the Heche-Bochner identity for the twisted convolution on the Heisenberg group.
Consider the Laguerre functions on $\mathbb{C}^n,$
\[\varphi_k^{n-1}(z)=L_k^{n-1}\left(\frac{1}{2}|z|^2\right)e^{-\frac{1}{4}|z|^2}.\]
For $\lambda\in\mathbb{R}^m\smallsetminus\{0\},$
define $\varphi_{k,\lambda}^{n-1}(z)=\varphi_k^{n-1}(|\lambda|^{1/2}z).$
\begin{theorem}\label{th7}\emph{\cite{T2}}
Let $f\in L^1(\mathbb{C}^n)$ be of the form $f=Pg,$ where $g$ is radial and $P\in H_{p,q}.$
Then for  $\lambda>0,$
\[f\times_\lambda\varphi_{k,\lambda}^{n-1}(z)=
\begin{cases}
(2\pi)^{-n}\lambda^{p+q}P(z)g\times_\lambda\varphi_{k-p,\lambda}^{n+p+q-1}(z), &\text{ if } k\geq p \\
0, &\text{ otherwise}
\end{cases}\]
and for $\lambda<0,$
\[f\times_\lambda\varphi_{k,\lambda}^{n-1}(z)=
\begin{cases}
(2\pi)^{-n}|\lambda|^{p+q}P(z)g\times_\lambda\varphi_{k-q,\lambda}^{n+p+q-1}(z), &\text{ if } k\geq q \\
0, &\text{ otherwise},
\end{cases}\]
where convolution on the right-hand side is on $\mathbb{C}^{n+p+q}.$
\end{theorem}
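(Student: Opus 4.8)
The plan is to pass to the Weyl transform, which turns $\lambda$-twisted convolution into composition of operators on $L^2(\mathbb{R}^n)$, and to exploit the fact that the Laguerre functions $\varphi_{k,\lambda}^{n-1}$ are, up to a constant, the inverse Weyl transforms of the spectral projections of the scaled Hermite (harmonic oscillator) operator. First I would reduce to the case $|\lambda|=1$: since $\varphi_{k,\lambda}^{n-1}(z)=\varphi_k^{n-1}(|\lambda|^{1/2}z)$ and both sides of the claimed identity scale homogeneously under $z\mapsto|\lambda|^{1/2}z$, the general statement follows from the normalized one by a change of variables that sends $\times_\lambda$ to $\times_{\pm1}$, picking up the stated power $|\lambda|^{p+q}$ from the degree $p+q$ of $P$.

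Next I would set up the algebraic machinery. Recall the special Hermite functions $\Phi_{\alpha\beta}$, $\alpha,\beta\in\mathbb{Z}_+^n$, whose Weyl transforms are the rank-one operators $\langle\,\cdot\,,\Phi_\alpha\rangle\Phi_\beta$ built from the Hermite basis $\{\Phi_\alpha\}$ of $L^2(\mathbb{R}^n)$. The radial Laguerre function is, up to normalization, the diagonal sum $\varphi_k^{n-1}=\sum_{|\alpha|=k}\Phi_{\alpha\alpha}$, so $W_1(\varphi_k^{n-1})$ is a constant multiple of the orthogonal projection $P_k$ onto the span of $\{\Phi_\alpha:|\alpha|=k\}$. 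Using the homomorphism property $W_\lambda(f\times_\lambda g)=W_\lambda(f)W_\lambda(g)$, the identity to be proved becomes a statement about the operator $W(Pg)\,P_k$.

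The heart of the argument is the Hecke-Bochner description of $W(Pg)$ for $P\in H_{p,q}$. Writing $P$ in terms of the creation operators $A_j^*$ and annihilation operators $A_j$ of the Schrödinger model, so that the monomials $z^\alpha\bar z^\beta$ with $|\alpha|=p$, $|\beta|=q$ correspond to products of $p$ operators of one type and $q$ of the other, one checks that conjugating $W(g)$ (which, $g$ being radial, commutes with the $U(n)$-action and is a function of the number operator) by these operators shifts the eigenspace index by $p-q$ and, crucially, raises the underlying dimension from $n$ to $n+p+q$ through the harmonicity of $P$; this is exactly what converts $L_k^{n-1}$ into $L_{k-p}^{n+p+q-1}$. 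I would make this precise by establishing, inductively on $p+q$, the operator identities describing the action of a single factor $z_j$ (resp. $\bar z_j$), and then assembling them, with Corollary \ref{corol1} and Lemma \ref{lemma2} controlling the splitting into harmonic pieces at each step.

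The role of the sign of $\lambda$ and of the cutoff $k\ge p$ (resp. $k\ge q$) is the last point to pin down. For $\lambda>0$ the representation $\pi_\lambda$ realizes $\bar z_j$ as annihilation and $z_j$ as creation, so applying the $p$ annihilation operators coming from the holomorphic part of $P$ kills $P_k$ precisely when $k<p$, which accounts for the vanishing; for $\lambda<0$ the roles of $z$ and $\bar z$ are interchanged, swapping $p$ and $q$. I expect the main obstacle to be the careful bookkeeping of the dimension jump $n\to n+p+q$ together with the exact constants (the factor $(2\pi)^{-n}$ and the power $|\lambda|^{p+q}$): one must track how the normalization of $\varphi_k^{n-1}$, the Laguerre recursion, and the Weyl-transform constants combine, and verify that the induction on degree closes with the asserted shift $\varphi_{k-p,\lambda}^{n+p+q-1}$ living on $\mathbb{C}^{n+p+q}$.
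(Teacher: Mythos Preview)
The paper does not prove this theorem at all: it is quoted verbatim from Thangavelu's monograph \cite{T2} and used as a black box to derive the $H$-type analogue in the subsequent result. There is therefore no ``paper's own proof'' to compare against.

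That said, your outline is the standard route to the Hecke--Bochner identity for the twisted convolution, and it is essentially the argument given in \cite{T2}: reduce to $|\lambda|=1$ by scaling, pass to the Weyl transform so that $\times_\lambda$ becomes operator composition, identify $\varphi_k^{n-1}$ with the projection $P_k$ onto the $k$-th Hermite eigenspace, and then use the creation/annihilation calculus together with the harmonicity of $P$ to shift $k\mapsto k-p$ and raise the superscript $n-1\mapsto n+p+q-1$. The sign of $\lambda$ interchanges the roles of $z_j$ and $\bar z_j$, which explains the $p\leftrightarrow q$ swap. One small caution: in the actual proof the dimension jump and the Laguerre index shift are most cleanly obtained not through Corollary~\ref{corol1} or Lemma~\ref{lemma2} (which are about decomposing products $z_jP$, $\bar z_jP$ into harmonic pieces) but rather through explicit identities for the special Hermite functions, e.g.\ the relations $A_j^*\Phi_\alpha=(2\alpha_j+2)^{1/2}\Phi_{\alpha+e_j}$ and the Laguerre recursions $L_k^{a}-L_{k-1}^{a}=L_k^{a-1}$. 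If you try to run the induction purely at the level of polynomials via Corollary~\ref{corol1}, you will generate lower-degree harmonic remainders $P_0$, $P_0'$ that must be shown to contribute zero, which is extra work; the operator-theoretic bookkeeping in \cite{T2} avoids this.
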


An analogue of the above Heche-Bochner identity for $H$-type group can be stated as follows.
\begin{theorem}
Let $f\in L^1(\mathbb{C}^n)$ be of the form $f=gP$ where $g$ is radial and $P\in H^\lambda_{p,q},$ where
$H^\lambda_{p,q}$ defined in (\ref{exp42}). Then for $\lambda\in\mathbb{R}^m\smallsetminus\{0\}$
\[f\times_\lambda\varphi_{k,\lambda}^{n-1}(z)=
(2\pi)^{-n}|\lambda|^{p+q}P(z)g\times_\lambda\varphi_{k-p,\lambda}^{n+p+q-1}(z'),\]
if $k\geq p$ and $0$ otherwise, where $z'\in \mathbb C^{n+p+q}$ be such that $|z|=|z'|$
and convolution on the right is on $\mathbb{C}^{n+p+q}.$
\end{theorem}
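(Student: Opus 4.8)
The plan is to reduce the identity to its Heisenberg-group counterpart, Theorem \ref{th7}, by means of the transference formula (\ref{exp27}). Since $\varphi_{k,\lambda}^{n-1}$ is radial, its image under the twisting map of (\ref{exp41}) is itself, so applying (\ref{exp27}) with $g=\varphi_{k,\lambda}^{n-1}$ gives
\[f\times_\lambda\varphi_{k,\lambda}^{n-1}(z)=f_\lambda\times_{|\lambda|}\varphi_{k,\lambda}^{n-1}(z_\lambda),\]
where $z_\lambda$ is the complexification of $A_\lambda^tx$. The whole problem is thereby transported to the ordinary twisted convolution on $\mathbb{C}^n$, in which the parameter is the positive number $|\lambda|$.

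The first genuine step is to verify that $f_\lambda$ has exactly the shape demanded by Theorem \ref{th7}. Writing $f=gP$ and using $f_\lambda(w)=f(\tilde{w}_\lambda)$ from (\ref{exp41}), I would factor $f_\lambda=g_\lambda P_\lambda$. Because $g$ is radial and $A_\lambda$ is orthogonal (so $|\tilde{w}_\lambda|=|w|$), the function $g_\lambda$ coincides with $g$ and is still radial; and by the very definition (\ref{exp42}) of $H^\lambda_{p,q}$, the hypothesis $P\in H^\lambda_{p,q}$ means precisely that $P_\lambda\in H_{p,q}$. Hence $f_\lambda=g_\lambda P_\lambda$ is a product of a radial function with a bigraded harmonic of type $(p,q)$, as required.

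Next I would invoke Theorem \ref{th7} with the positive parameter $|\lambda|$. Since $|\lambda|>0$ always, only the first ($\lambda>0$) branch can occur, which is why the $H$-type statement has a single case with the shift $k-p$ and no dependence on a sign. This yields
\[f_\lambda\times_{|\lambda|}\varphi_{k,\lambda}^{n-1}(z_\lambda)=(2\pi)^{-n}|\lambda|^{p+q}P_\lambda(z_\lambda)\,g_\lambda\times_{|\lambda|}\varphi_{k-p,\lambda}^{n+p+q-1}(z')\]
for $k\geq p$ and $0$ otherwise, where $z'\in\mathbb{C}^{n+p+q}$ has $|z'|=|z_\lambda|$. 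To return to the original variable I would use $P_\lambda(z_\lambda)=P(z)$, which is the identity $f_\lambda(z_\lambda)=f(z)$ specialized to the function $P$, together with $|z_\lambda|=|z|$ (again the orthogonality of $A_\lambda^t$). Finally, since $g_\lambda$ equals $g$ and the twisted convolution of radial functions with the radial Laguerre function is itself radial, its value at $z'$ depends only on $|z'|=|z|$, so it may be recorded as $g\times_\lambda\varphi_{k-p,\lambda}^{n+p+q-1}(z')$ on $\mathbb{C}^{n+p+q}$.

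The substantive mathematical input is carried entirely by Theorem \ref{th7}; the work here is the bookkeeping that makes the reduction legitimate. I expect the main point to watch to be the interplay of the two orthogonal twists $A_\lambda$ and $A_\lambda^t$ and their attendant variables $z_\lambda$ versus $\tilde{z}_\lambda$, so that the harmonicity of $P_\lambda$, the radiality of $g_\lambda$, and the evaluation point in $\mathbb{C}^{n+p+q}$ are all matched correctly. A secondary subtlety is the meaning of $\times_\lambda$ on $\mathbb{C}^{n+p+q}$: for radial arguments it agrees with the Heisenberg $|\lambda|$-twisted convolution, and I would note this equivalence explicitly rather than attempt to work with a higher-dimensional group structure directly.
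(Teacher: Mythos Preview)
Your proposal is correct and follows essentially the same route as the paper: reduce via the transference identity (\ref{exp27}) using the radiality of $\varphi_{k,\lambda}^{n-1}$, apply Theorem~\ref{th7} with the positive parameter $|\lambda|$ to $f_\lambda=gP_\lambda$, and then undo the change of variables using $P_\lambda(z_\lambda)=P(z)$ and $|z_\lambda|=|z|$. Your write-up is in fact more explicit than the paper's about the bookkeeping (why $g_\lambda=g$, why only the $\lambda>0$ branch of Theorem~\ref{th7} is relevant, and how to interpret $\times_\lambda$ on $\mathbb{C}^{n+p+q}$ for radial arguments), but the underlying argument is identical.
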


\begin{proof}
Since $\varphi_{k,\lambda}^{n-1}$ is radial, by (\ref{exp27}) and Theorem \ref{th7} we get
\begin{align*}
f\times_\lambda\varphi_{k,\lambda}^{n-1}(z)
&=f_\lambda \times_{|\lambda|} \varphi_{k,\lambda}^{n-1}(z_\lambda)\\
&=(2\pi)^{-n}|\lambda|^{p+q}P_\lambda(z_\lambda)g\times_{|\lambda|}\varphi_{k-p,\lambda}^{n+p+q-1}(z_\lambda') \\
&=(2\pi)^{-n}|\lambda|^{p+q}P(z)g\times_\lambda\varphi_{k-p,\lambda}^{n+p+q-1}(z'),
\end{align*}
where $z_\lambda',z'\in \mathbb C^{n+p+q}$ such that $|z_\lambda|=|z_\lambda'|=|z'|.$
\end{proof}

Next, we deduce an injectivity result for the $H$-type groups, which is
known for the Heisenberg group.
\begin{theorem}\label{th100}\emph{\cite{AR}}
Let $\partial\Omega$ be the boundary of a bounded domain $\Omega$ in $\mathbb{C}^{n}.$
Let f  be such that $f(z) e^{\left(\frac{1}{4}+\epsilon\right) |z|^{2}}
\in L^{p}\left(\mathbb{C}^{n}\right),$ for some $\epsilon>0$  and $1 \leq p \leq \infty.$
Suppose that $f \times\mu_{s}(z)=0$ for all $z \in \partial\Omega$ and $s>0.$  Then $f=0.$
\end{theorem}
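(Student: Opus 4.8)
The plan is to reduce the hypothesis to the vanishing of all Laguerre spectral projections of $f$ on $\partial\Omega$, to record why that reduction alone cannot suffice, and then to run a microlocal argument of Agranovsky--Quinto type in which the Gaussian weight plays the role of compact support. Since each Laguerre function $\varphi_k^{n-1}$ is radial, twisted convolution against it is an average of the twisted spherical means over the radial variable,
\[ f\times\varphi_k^{n-1}(z)=c_n\int_0^\infty (f\times\mu_s)(z)\,\varphi_k^{n-1}(s)\,s^{2n-1}\,ds, \]
and the weight $f(z)e^{(\frac14+\epsilon)|z|^2}\in L^p$ forces this integral to converge absolutely. Hence the hypothesis gives $f\times\varphi_k^{n-1}(z)=0$ for every $k\in\mathbb{Z}_+$ and every $z\in\partial\Omega$; each of these projections is a real-analytic eigenfunction of the special Hermite operator.

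I would stress, however, that this reduction is not by itself enough. The Laguerre function $f=\varphi_k^{n-1}$ has all its twisted means vanishing on the spheres $|z|^2=2\rho_{k,j}$ cut out by the zeros $\rho_{k,j}$ of $L_k^{n-1}$, so if $\partial\Omega$ happened to be one of these spheres the projections would vanish there while $f\neq0$. The escape is that $\varphi_k^{n-1}$ decays only like $e^{-|z|^2/4}$ and therefore violates $f(z)e^{(\frac14+\epsilon)|z|^2}\in L^p$. Thus the super-Gaussian decay is essential and must be used globally, coupling the projections rather than killing them one at a time, since each projection lies in a fixed eigenspace and carries only the critical decay $e^{-|z|^2/4}$.

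The approach I would take is therefore microlocal and geometric, treating $f$ as a whole. View $(z,s)\mapsto f\times\mu_s(z)$ as a generalized Radon transform integrating $f$ over the spheres $S_s(z)$, modulated by the symplectic phase $e^{\frac{i}{2}\mathrm{Im}(z\cdot\bar w)}$; the super-Gaussian weight makes $f$ behave, for the purposes of wavefront propagation, like a compactly supported distribution. Because $\Omega$ is bounded, $\partial\Omega$ has an extremal point $z_0$ of maximal distance from the origin, at which a centered sphere is internally tangent and $\partial\Omega$ lies locally to one side; near such a point the canonical relation of the transform is clean and singularities of $f$ are detected by the means centered on $\partial\Omega$. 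Assuming $f\neq0$, I would locate a boundary point of the analytic singular support of $f$, choose a center on $\partial\Omega$ whose sphere is tangent there, and derive that $f\times\mu_s$ is singular at that center for some $s$, contradicting its vanishing.

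The hard part will be carrying the Euclidean Agranovsky--Quinto wavefront analysis into the twisted setting, where the phase $e^{\frac{i}{2}\mathrm{Im}(z\cdot\bar w)}$ perturbs the canonical relation of the spherical Radon transform, and establishing that a bounded smooth hypersurface $\partial\Omega$ cannot be contained in the exceptional algebraic varieties (the Coxeter-type unions) on which injectivity fails. Verifying that the Gaussian weight genuinely upgrades the $C^\infty$ wavefront argument to an analytic one, so that vanishing on $\partial\Omega$ propagates to all of $\mathbb{C}^n$, is the crux; once it is in place one concludes $f\equiv0$.
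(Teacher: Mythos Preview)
The paper does not prove this statement at all: Theorem~\ref{th100} is quoted verbatim from Agranovsky--Rawat \cite{AR} and used as a black box to deduce the $H$-type analogue via the change of variables (\ref{exp13}). There is therefore no proof in the paper to compare your proposal against.

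As to your sketch itself, the first reduction is exactly right and matches what \cite{AR} does: integrating $f\times\mu_s$ against $\varphi_k^{n-1}(s)s^{2n-1}\,ds$ gives $f\times\varphi_k^{n-1}(z)=0$ on $\partial\Omega$ for every $k$, and each projection is a real-analytic special-Hermite eigenfunction. Your cautionary remark about Laguerre-zero spheres is also well taken and identifies precisely why the weight $e^{(\frac14+\epsilon)|z|^2}$ is not decorative. Where you diverge from \cite{AR} is in the completion. Agranovsky and Rawat do \emph{not} run a microlocal wavefront argument; instead they exploit the super-Gaussian decay to show that each $e^{|z|^2/4}\,(f\times\varphi_k^{n-1})(z)$ extends to an entire function of controlled exponential type on $\mathbb{C}^{2n}$, and then use complex-analytic/Phragm\'en--Lindel\"of type reasoning together with the geometry of a bounded domain (an extremal boundary point, as you also single out) to force each projection to vanish identically. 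Your proposed route through analytic wavefront sets and canonical relations is conceivably workable, but as you yourself flag, the ``hard part'' --- adapting the Agranovsky--Quinto machinery to the twisted phase and showing $\partial\Omega$ avoids the exceptional varieties --- is left entirely open, so the proposal is at present an outline rather than a proof. If you want a complete argument, follow the entire-function approach in \cite{AR}; it is shorter and uses the decay hypothesis in a transparent way.
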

Now, we state an analogue of the above result for the $H$-type groups.
\begin{theorem}
Let $\partial\Omega$ be the boundary of a bounded domain $\Omega$ in $\mathbb{C}^{n}.$
Let f  be such that $f(z) e^{\left(\frac{1}{4}+\epsilon\right)
|\lambda||z|^2}\in L^p\left(\mathbb{C}^{n}\right),$
 for some $\epsilon>0$  and $1 \leq p \leq \infty.$ Suppose that
 $f \times_\lambda \mu_{s}(z)=0$ for all $z \in \partial\Omega$ and $s>0.$ Then $f=0.$
\end{theorem}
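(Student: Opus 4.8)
The plan is to transport the hypotheses to the Heisenberg group by means of the orthogonal change of variables $A_\lambda$ and then quote Theorem \ref{th100}. The bridge is the identity (\ref{exp13}), namely $f\times_\lambda\mu_s(z)=f_\lambda\times_{|\lambda|}\mu_s(z_\lambda)$. Since $z\mapsto z_\lambda$ is the complexification of the orthogonal map $x\mapsto A_\lambda^tx$, it is a linear homeomorphism of $\mathbb C^n$; hence the hypothesis $f\times_\lambda\mu_s(z)=0$ for every $z\in\partial\Omega$ and $s>0$ is equivalent to $f_\lambda\times_{|\lambda|}\mu_s(\zeta)=0$ for every $\zeta\in\partial\Omega_\lambda$ and $s>0$, where $\Omega_\lambda=\{z\in\mathbb C^n:\tilde z_\lambda\in\Omega\}$ is the orthogonal image of $\Omega$, again a bounded domain whose boundary is $\partial\Omega_\lambda$.

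First I would verify that the growth hypothesis survives the transfer. Because $A_\lambda$ is orthogonal we have $|z|=|z_\lambda|$ and the induced change of variables is measure preserving, so the map $f\mapsto f_\lambda$ preserves both the Gaussian factor $e^{(\frac{1}{4}+\epsilon)|\lambda||z|^2}$ and the $L^p$ norm; consequently $f_\lambda(z)e^{(\frac{1}{4}+\epsilon)|\lambda||z|^2}\in L^p(\mathbb C^n)$. At this stage $f_\lambda$ meets the hypotheses of an injectivity theorem on $\mathbb C^n$, but for the $|\lambda|$-twisted spherical mean rather than for the unit-parameter mean appearing in Theorem \ref{th100}.

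The only genuinely technical point is to reconcile these two parameters, and for this I would use a dilation. Put $t=|\lambda|^{1/2}$ and $g(\zeta)=f_\lambda(\zeta/t)$. Rescaling the integration variable and the sphere radius in (\ref{exp15}) (with $\mu_{\lambda,j}=|\lambda|$, valid on an $H$-type group) gives
\begin{equation*}
f_\lambda\times_{|\lambda|}\mu_s(\zeta)=g\times\mu_{ts}(t\zeta),
\end{equation*}
so the vanishing of $f_\lambda\times_{|\lambda|}\mu_s$ on $\partial\Omega_\lambda$ for all $s>0$ is equivalent to the vanishing of $g\times\mu_{s'}$ on $t\,\partial\Omega_\lambda=\partial(t\Omega_\lambda)$ for all $s'>0$, the boundary of the bounded domain $t\Omega_\lambda$. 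The same substitution $\zeta=t\eta$ turns the weight $e^{(\frac{1}{4}+\epsilon)|\zeta|^2}$ into $e^{(\frac{1}{4}+\epsilon)|\lambda||\eta|^2}$, whence $g(\zeta)e^{(\frac{1}{4}+\epsilon)|\zeta|^2}\in L^p(\mathbb C^n)$ follows from the integrability established above.

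With these reductions in hand, $g$ satisfies every hypothesis of Theorem \ref{th100} on $\partial(t\Omega_\lambda)$, so that theorem forces $g=0$, and unwinding the dilation together with the orthogonal change of variables yields $f_\lambda=0$ and finally $f=0$. I expect the only step needing care to be the dilation bookkeeping, that is, tracking how the sphere radius, the Gaussian weight, and the $L^p$ norm scale under $\zeta\mapsto\zeta/t$, since the transfer via (\ref{exp13}) and the orthogonal invariance of $A_\lambda$ are immediate. Alternatively, if the general-parameter form of the injectivity result in \cite{AR} is invoked directly for $|\lambda|$, the dilation step can be dispensed with and the conclusion read off at once from (\ref{exp13}).
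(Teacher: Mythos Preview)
Your proposal is correct and follows essentially the same route as the paper: transfer to the Heisenberg setting via the identity (\ref{exp13}) and the orthogonal map $A_\lambda$, check that the image of $\Omega$ is again a bounded domain with the right boundary, and then invoke Theorem \ref{th100}. The paper's proof does exactly this in two lines and applies Theorem \ref{th100} directly to $f_\lambda\times_{|\lambda|}\mu_s$; your explicit dilation $\zeta\mapsto \zeta/|\lambda|^{1/2}$ simply makes rigorous the passage from the $|\lambda|$-parameter mean and weight to the unit-parameter statement of Theorem \ref{th100}, a normalisation the paper leaves implicit (and which, as you note, is unnecessary if one quotes the general-parameter form of the result from \cite{AR}).
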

\begin{proof}
From (\ref{exp13}) we have $f \times_\lambda \mu_s(z)=f_\lambda \times_{|\lambda|}\mu_s(z_\lambda).$
Define $\Omega^\prime=\{z_\lambda\in\mathbb C^n:z\in\Omega\}.$ Since the boundary of bounded domain $\Omega^\prime$ is
$\partial\Omega^\prime=\{z_\lambda:~ z\in\partial\Omega\},$ by Theorem \ref{th100},
we can conclude that $f_\lambda=0$ and hence $f=0.$
\end{proof}

\noindent\textbf{Concluding remark:}
We know that in the case of the M\'{e}tivier groups, the symplectic bilinear
form $\sum_{j=1}^m\lambda_j\text{Re}\,(z\cdot\overline{U^{(j)}w})$ cannot be made $U(n)$
-invariant, because all of $\mu_{\lambda,j}$ need not be identical. Hence we require more
assumptions on the functions to prove analogous results as to the Heisenberg group.
However, in the case of the $H$-type groups, all $\mu_{\lambda,j}$ are identical,
we do not require further assumption to prove the results for $H$-type groups.

\bigskip

\noindent{\bf Acknowledgements:} The first and second authors would like to gratefully
acknowledge the support provided by IIT Guwahati, Government of India.

\bigskip


\begin{thebibliography}{1000}

\bibitem{AR} M. L. Agranovsky and R. Rawat, {\em Injectivity sets for spherical means on the Heisenberg group,}
J. Fourier Anal. Appl. 5  (1999), no. 4, 363-372.

\bibitem{BQ} J. Boman and E. T. Quinto, {\em Support theorems for real-analytic Radon transforms,}
Duke Math. J. 55 (1987), no. 4, 943-948.

\bibitem{BU} A. Bonfiglioli and F. Uguzzoni, {\em Nonlinear Liouville theorems for some critical problems on $H$-type
groups,} J. Funct. Anal. 207 (2004), no. 1, 161-215.

\bibitem{CQ} A. M. Cormack and E. T. Quinto, {\em A Radon transform on spheres through the origin in $\mathbb R^n$
and applications to the Darboux equation,} Trans. Amer. Math. Soc. 260 (1980), no. 2, 575-581.

\bibitem{D} C. F. Dunkl, {\em Boundary value problems for harmonic functions on the Heisenberg group,}
 Canad. J. Math. 38 (1986), no. 2, 478-512.

\bibitem{EK} C. L. Epstein and B. Kleiner, {\em Spherical means in annular regions,}
Comm. Pure Appl. Math. 46 (1993), no. 3, 441-451.

\bibitem{GE} D. Geller, {\em Spherical harmonics, the Weyl transform and the Fourier transform on the Heisenberg group,}
 Canad. J. Math. 36 (1984), no. 4, 615-684.

\bibitem{G} J. Globevnik, {\em Zero integrals on circles and characterizations of harmonic and analytic functions,}
Trans. Amer. Math. Soc. 317 (1990), no. 1, 313-330.

\bibitem{Gr1} P. C. Greiner, {\em Spherical harmonics on the Heisenberg group,} Canad. Math. Bull. 23 (1980), no. 4, 383-396.

\bibitem{H} S. Helgason, {\em The Radon Transform,} Progress in Mathematics, 5, Birkh\"{a}user, Boston, Mass., 1980.

\bibitem{K}  A. Kaplan, {\em Fundamental solution for a class of hypoelliptic PDE generated by composition of
quadratic forms,} Trans. Amer. Math. Soc. 258 (1980), no. 1, 147-153.

\bibitem{M} G. M\'{e}tivier, {\em Hypoellipticit\'{e} analytique sur des groupes nilpotents de rang $2$,} Duke Math. J. 47
(1980), no. 1, 195-221.

\bibitem{MS} D. M\"{u}ller and A. Seeger, {\em Singular spherical maximal operators on a class of two step nilpotent Lie groups,}
Israel J. Math. 141 (2004), 315-340.

\bibitem{NT1} E. K. Narayanan and S. Thangavelu, {\em Injectivity sets for spherical means on the Heisenberg group,}
J. Math. Anal. Appl. 263 (2001), no. 2, 565-579.

\bibitem{NT2} E. K. Narayanan and S. Thangavelu, {\em A spectral Paley-Wiener theorem for the
Heisenberg group and a support theorem for the twisted spherical means on $~\mathbb C^n$,}
Ann. Inst. Fourier (Grenoble) 56 (2006), no. 2, 459-473.

\bibitem{Q1} E. T. Quinto, {\em Helgason's support theorem and spherical Radon transforms,}
 Contemp. Math. 464 (2008), 249-264.

\bibitem{RS} R. Rawat and R. K. Srivastava, {\em Twisted spherical  means in annular regions in
$\mathbb C ^n$ and support theorems,} Ann. Inst. Fourier (Grenoble) 59 (2009), no. 6, 2509-2523.

\bibitem{R} W. Rudin, {\em Function theory in the unit ball of $~\mathbb C^n$,}
Classics in Mathematics, Springer-Verlag, Berlin, 2008.

\bibitem{Sri1} R. K. Srivastava, {\em Sets of injectivity for weighted twisted spherical means and
support theorems,}
J. Fourier Anal. Appl. 18 (2012), no. 3, 592-608.

\bibitem{Sri2} R. K. Srivastava, {\em Coxeter system of lines and planes are sets of injectivity for the twisted spherical means,}
J. Funct. Anal. 267 (2014), no. 2, 352-383.

\bibitem{Sri3} R. K. Srivastava, {\em Real analytic expansion of spectral projections and extension of the Hecke-Bochner identity,}
Israel J. Math. 200 (2014), no. 1, 171-192.

\bibitem{Sri4} R. K. Srivastava, {\em Non-harmonic cones are sets of injectivity for the twisted spherical means on
 $\mathbb{C}^n$,} Trans. Amer. Math. Soc. 368 (2016), no. 3, 1941-1957.

\bibitem{T2} S. Thangavelu, {\em An introduction to the uncertainty principle,}
Progress in Mathematics, 217, Birkh\"{a}user Boston, Inc., Boston, MA, 2004.

\bibitem{V1} V. V. Volchkov, {\em Integral Geometry and Convolution Equations,}
Kluwer Academic Publishers, Dordrecht, 2003.

\bibitem{V2} V. V. Volchkov  and V. V. Volchkov, {\em Harmonic analysis of mean periodic functions
on symmetric spaces and the Heisenberg group,} Springer Monographs in Mathematics, Springer-Verlag London, Ltd., London, 2009.

\end{thebibliography}
\end{document}